\documentclass[reqno]{amsart}
\usepackage{verbatim}
\usepackage{amssymb}
\usepackage{enumerate}

\usepackage[mathletters]{ucs}
\numberwithin{equation}{section}
\usepackage{skull}

\usepackage[utf8x]{inputenc}
\usepackage{tikz}
\usetikzlibrary{decorations,decorations.markings}

\tikzset{
  strike through/.style={
    postaction=decorate,
    decoration={
      markings,
      mark=at position 0.5 with {
        \draw[-] (-3pt,-3pt) -- (3pt, 3pt);
      }
    }
  }
}

\newtheorem{theorem}{Theorem}[section]
\newtheorem{proposition}[theorem]{Proposition}
\newtheorem{lemma}[theorem]{Lemma}
\newtheorem{corollary}[theorem]{Corollary}

\newtheorem{problem}{Problem}

\newtheorem{example}[theorem]{Example}

\theoremstyle{definition}

\newtheorem{definition}[theorem]{Definition}

\newcommand{\mbb}[1]{\mathbb{#1}}

\newcommand{\setm}{\setminus}
\newcommand{\empt}{\emptyset}
\newcommand{\subs}{\subset}
\newcommand{\dom}{\operatorname{dom}}
\newcommand{\ran}{\operatorname{ran}}
\newcommand{\cof}{\operatorname{cof}}
\newcommand{\cov}{\operatorname{cov}}
\def\<{\left\langle}
\def\>{\right\rangle}
\def\br#1;#2;{\bigl[ {#1} \bigr]^ {#2} }

\newcommand{\cl}{\operatorname{cl}}

\newcommand{\snode}[2]{\node (#1) [ pro] {\tiny #2};}

\author[I. Juh\'asz]{Istv\'an Juh\'asz}
\address
      { Alfréd Rényi Institute of Mathematics, Hungarian Academy of Sciences
}
\email{juhasz@renyi.hu}

\author[L. Soukup]{Lajos Soukup}
\address
      { Alfréd Rényi Institute of Mathematics, Hungarian Academy of Sciences}
\email{soukup@renyi.hu}
\urladdr{http://www.renyi.hu/$\tilde{}$soukup}

\author[Z. Szentmikl\'ossy]{Zolt\'an Szentmikl\'ossy}
\address{E\"otv\"os University of Budapest}
\email{szentmiklossyz@gmail.com
}

\title[Between countably compact and $\omega$-bounded]
{Between countably compact and $\omega$-bounded}
\thanks{The research on and preparation of this paper was
supported by  OTKA grant no. K 83726.}

\subjclass[2010]{54A25, 54A35, 54D30, 54D65}
\keywords{compact, countably compact, P-bounded,
${\omega}$-bounded}
\date{\today}

\dedicatory{{Dedicated to the memory of Mary Ellen Rudin}}

\begin{document}

\begin{abstract}
Given a property $P$ of subspaces of a $T_1$ space $X$, we say that $X$ is
{\em $P$-bounded} iff every subspace of $X$ with property $P$ has compact closure
in $X$. Here we study $P$-bounded spaces for the properties $P \in \{\omega D, \omega N, C_2 \}$
where $\omega D \, \equiv$ ``countable discrete",  $\omega N \, \equiv$ ``countable nowhere dense",
and $C_2 \,\equiv$ ``second countable". Clearly, for each of these
$P$-bounded is between countably compact and $\omega$-bounded.

We give examples in ZFC that separate all these boundedness properties and their appropriate combinations.
Consistent separating examples with better properties (such as: smaller cardinality or weight, local
compactness, first countability) are also produced.

We have interesting results concerning $\omega D$-bounded spaces
which show that $\omega D$-boundedness is much stronger than
countable compactness:

\begin{itemize}
\item  Regular $\omega D$-bounded spaces of Lindelöf degree $< \cov(\mathcal{M})$ are $\omega$-bounded.

\item  Regular $\omega D$-bounded spaces of countable tightness are $\omega N$-bounded, and if
$\mathfrak{b} > \omega_1$ then even $\omega$-bounded.

\item  If a product of Hausdorff space is $\omega D$-bounded then all but one of its factors must be $\omega$-bounded.

\item  Any product of at most $\mathfrak{t}$ many Hausdorff $\omega D$-bounded spaces is countably compact.
\end{itemize}

As a byproduct we obtain that regular, countably tight, and countably compact spaces are discretely
generated.

\end{abstract}

\maketitle

\section{Introduction}

The work we report on in this paper can be considered as complementary to that in \cite{JvMW} where
certain natural boundedness properties that are strengthenings of the $\omega$-bounded
property had been investigated. Here we go in the opposite direction in the sense that
we study boundedness properties that are weaker than $\omega$-bounded but still stronger
than countably compact.

\begin{definition}
Given a property $P$ of subspaces of a space $X$, we say that $X$ is
{\em $P$-bounded} iff every subspace of $X$ with property $P$ has compact closure
in $X$.
\end{definition}

All spaces considered in this paper are assumed to be $T_1$. We intend to study
$P$-bounded spaces for the following choices of a property $P$ of subspaces:

\bigskip

\begin{itemize}
\item $P \,\equiv\,$``countable discrete" (in short: $\omega D$)

\medskip

\item $P \,\equiv\,$ ``countable nowhere dense" (in short: $\omega N$)

\medskip

\item $P \,\equiv\,$ ``second countable" (in short: $C_2$)

\end{itemize}

\bigskip

Let us note that, as second countable spaces are separable, we clearly have

\begin{center}
$C_2$-bounded $\,\equiv\, \omega C_2$-bounded $\,\equiv\, \omega C_1$-bounded,
\end{center}

\noindent where,
of course, $C_1$ abbreviates "first countable".

It is obvious that $\omega D$-bounded spaces are countably compact. Moreover,
as discrete subspaces of a crowded space are nowhere dense, it follows that
both $\omega N$-bounded and $C_2$-bounded crowded spaces are $\omega D$-bounded.
The following diagram gives a visual summary of these easy implications
between our boundedness properties and their natural combinations.

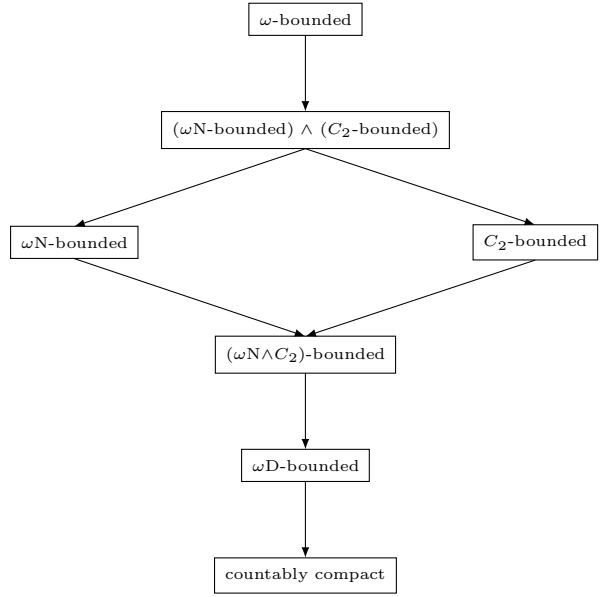
\begin{figure}[h]
\centering
\begin{tikzpicture}
[pro/.style={rectangle, outer sep=0mm, inner sep=4pt,minimum size=2mm,draw=black},
]
\matrix[row sep=10mm,column sep=3mm]{

&\snode{bounded}{${\omega}$-bounded};\\

&\snode{nwdC}{(${\omega}$N-bounded) $\land $ ($C_2$-bounded)};\\

\snode{nwdbounded}{${\omega}$N-bounded};& ;&;
\snode{m2bounded}{$C_2$-bounded};\\

&\snode{m2nwdbounded}{(${\omega}$N$\land$$C_2$)-bounded};\\

&\snode{Dbounded}{${\omega}$D-bounded};\\

&\snode{ccompact}{countably compact};&;
\\
};

\draw[>=latex,->] (bounded.south) --  (nwdC.north);

\draw[>=latex,->] (nwdC.south) --  (nwdbounded.north);

\draw[>=latex,->] (nwdC.south) --  (m2bounded.north);

\draw[>=latex,->] (nwdbounded.south) --  (m2nwdbounded.north);

\draw[>=latex,->] (m2bounded.south)  --  (m2nwdbounded.north);

\draw[>=latex,->] (m2nwdbounded.south) -- (Dbounded.north);
\draw[>=latex,->] (Dbounded.south) --  ( ccompact.north);

\end{tikzpicture}
\caption{ZFC implications}
\end{figure}

\section{Separation in ZFC}

The aim of this section is to show that all the seven classes of crowded
spaces appearing in this diagram are distinct. In fact, we are going to produce, in ZFC, zero-dimensional
crowded spaces that separate these classes.
Before we turn to constructing these examples, however, we need to do some preliminary work.

\begin{definition}
Given a property $P$ of subspaces of a space $X$ and a fixed subset $Y \subset X$, we write
$$[Y]^P = \{A \subs Y : A \mbox{ has property } P \},$$

and

$$\cl^P_X(Y) = \bigcup \{\overline{A} : A \in  [Y]^P \}.$$
Also, we say that $Y$ is {\em $P$-closed} in $X$ if $Y = \cl^P_X(Y)$.
(As usual, we shall omit the subscript $X$ when it is clear from the context.)
\end{definition}

Obviously, if $X$ is compact then every $P$-closed subspace of $X$ is $P$-bounded,
and if $X$ is also Hausdorff then the converse is also true.

Note that, in general, the subspace $\cl^P_X(Y)$ will not be $P$-closed in $X$.
But, at least for regular spaces, this will happen if $P = \omega D$ or $P = \omega N$.
In the first case this is easy and well-known: it follows from the fact that in a
regular space the points of any countable discrete set can be separated by disjoint
open sets. In the second case, when $P = \omega N$, we think that this observation is new.

\begin{lemma}\label{wN-closed}
Assume that $X$ is regular and $Y \subs X$. Then for any $H \in [X]^{\omega N}$ with
$H \subs \cl^{\omega N}(Y)$ there is $K \in [Y]^{\omega N}$ such that $H \subs \overline{K}$.
Consequently, $\cl^{\omega N}(Y)$ is $\omega N$-closed in $X$.
\end{lemma}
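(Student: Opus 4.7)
The plan is to enumerate $H = \{h_n : n < \omega\}$ and, for each $n$, fix a witness $B_n \in [Y]^{\omega N}$ with $h_n \in \overline{B_n}$. I will then construct $K = \bigcup_n A_n$ for carefully chosen $A_n \subseteq B_n$. The naive choice $A_n := B_n$ already gives $K \subseteq Y$ countable with $H \subseteq \overline{K}$, but $K$ can fail to be nowhere dense since a countable union of nowhere dense sets need not be nowhere dense; the shrinking of $B_n$ to $A_n$ via regularity is the crux of the argument.

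For each $n$ I would split into two cases based on how $h_n$ sits with respect to $\overline{H}$. \emph{Case 1}: $h_n \in \overline{B_n \cap \overline{H}}$. Then set $A_n := B_n \cap \overline{H}$; this is a countable subset of $Y$ with $h_n \in \overline{A_n}$ and, crucially, contained in the closed nowhere dense set $\overline{H}$. \emph{Case 2}: $h_n \notin \overline{B_n \cap \overline{H}}$. By regularity, pick an open neighborhood $V_n$ of $h_n$ with $\overline{V_n} \cap \overline{B_n \cap \overline{H}} = \emptyset$, and set $A_n := B_n \cap V_n$; this is a countable subset of $Y$, disjoint from $\overline{H}$ apart from possibly $h_n$ itself, and still accumulating at $h_n$ (since every sub-neighborhood of $h_n$ inside $V_n$ meets $B_n$). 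I would further tighten the choice of $V_n$ by applying $T_1$ to the finite set $\{h_m : m < n\}$ and regularity to each closed set $\overline{B_m}$ ($m < n$) not containing $h_n$, so that $\overline{V_n}$ also avoids these previously encountered points and nowhere-dense closures.

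The Case 1 contributions all lie in $\overline{H}$, so their union $K_1$ has closure inside the closed nowhere dense set $\overline{H}$ and is automatically nowhere dense. The main obstacle is verifying that the Case 2 contributions $K_2 := \bigcup_{n \in \text{Case 2}} A_n$ also form a set with nowhere dense closure, whence $\overline{K} = \overline{K_1} \cup \overline{K_2}$ is a union of two closed nowhere dense sets and hence nowhere dense. Supposing for contradiction that some nonempty open $V \subseteq \overline{K_2}$ exists, one observes that $V \setminus \overline{H}$ is also a nonempty open subset of $\overline{K_2}$, so every open $W \subseteq V \setminus \overline{H}$ must meet some $A_n = B_n \cap V_n$. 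The plan here is to exploit the separation conditions on the $\overline{V_n}$'s together with the individual nowhere-density of each $B_n$ to derive a contradiction; this step is where the real work lies.

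Finally, the ``Consequently'' statement falls out at once: given any $H' \in [\cl^{\omega N}(Y)]^{\omega N}$, the main claim yields $K \in [Y]^{\omega N}$ with $H' \subseteq \overline{K}$, hence $\overline{H'} \subseteq \overline{K}$, so every $x \in \overline{H'}$ lies in $\cl^{\omega N}(Y)$, establishing that $\cl^{\omega N}(Y)$ is $\omega N$-closed in $X$.
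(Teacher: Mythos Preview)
Your overall strategy --- find $A_n \subset B_n$ with $h_n \in \overline{A_n}$ and $K=\bigcup_n A_n$ nowhere dense --- is the right one, and your derivation of the ``Consequently'' clause is fine. But there is a genuine gap exactly where you say ``this step is where the real work lies'': the separation conditions you impose on the $V_n$'s do not force $K_2$ to be nowhere dense, and you supply no argument that they do. Here is a concrete obstruction (with $Y=X=\mathbb{R}^2$). Let $H=\{(0,1/(n{+}1)):n<\omega\}$, fix the countable nowhere dense set $C=\{(1/\ell,1/(m{+}1)):\ell\ge1,\ m\ge0\}$, which satisfies $H\subset\overline{C}$ and $C\cap\overline{H}=\emptyset$, enumerate a countable dense set $\{q_n:n<\omega\}\subset(1,2)^2$, and put $B_n=C\cup\{q_n\}$. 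Then every $n$ lands in Case~2, and since $h_n\in\overline{C}\subset\overline{B_m}$ for all $m$, your condition involving $\overline{B_m}$ is vacuous; the only remaining constraint on $V_n$ is that $\overline{V_n}$ miss the finite set $\{h_m:m<n\}$. The choice $V_n=(\text{small ball about }h_n)\cup(1,2)^2$ satisfies every requirement you wrote down, yet gives $q_n\in A_n$, so $K_2\supset\{q_n:n<\omega\}$ is dense in $(1,2)^2$. Of course \emph{you} would not pick such a $V_n$, but nothing in your proposal excludes it, and ``exploit the separation conditions \dots\ to derive a contradiction'' is not an argument.

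The paper's proof bypasses the Case~1/Case~2 split and supplies precisely the missing global control. One fixes a maximal pairwise disjoint family $\mathcal{U}$ of nonempty open sets with $\overline{U}\cap H=\emptyset$ for each $U\in\mathcal{U}$; regularity and nowhere-density of $H$ make $\bigcup\mathcal{U}$ dense. Only countably many members $U_0,U_1,\dots$ of $\mathcal{U}$ meet $\bigcup_n K_n$, and one sets
\[
K=\bigcup_{n<\omega}\Bigl(K_n\setminus\bigcup_{m<n}U_m\Bigr).
\]
The diagonal trimming forces $K\cap U_m\subset\bigcup_{n\le m}K_n$, a finite union of nowhere dense sets, for every $m$; since $\bigcup\mathcal{U}$ is dense open this makes $K$ nowhere dense. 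And $h_n\in\overline{K_n\setminus\bigcup_{m<n}U_m}$ because $h_n$ lies outside each $\overline{U_m}$. The maximal disjoint family $\mathcal{U}$ is exactly the global structure your local neighbourhoods $V_n$ lack.
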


\begin{proof}
Let us fix an $\omega$-type enumeration $H = \{h_n : n < \omega\}$ of $H$ and for each
$n < \omega$ pick $K_n \in [Y]^{\omega N}$ such that $h_n \in \overline{K_n}$.
Then take a maximal disjoint collection $\mathcal{U} \subs \tau(X)$ such that
$\overline{U} \cap H = \empt$ holds true
for all $U \in \mathcal{U}$. Clearly, $\cup \mathcal{U}$ is dense open in $X$
because $H$ is nowhere dense.

Since $\bigcup \{K_n : n < \omega \}$ is countable,
so is $\mathcal{V} = \{U \in \mathcal{U} : U \cap \bigcup \{K_n : n < \omega \} \ne \empt\}$,
hence we may write $\mathcal{V} = \{U_m : m < \omega\}$. Let us now put
$$K = \bigcup_{n < \omega} \big(K_n \setm \cup_{m < n} U_m \big).$$
Then for each $m < \omega$ we have $U_m \cap K \subs \cup_{i \le m} K_i$, hence $U \cap K$
is nowhere dense for each $U \in \mathcal{U}$. As $X \setm \cup \mathcal{U}$ is nowhere dense,
this implies that $K \in [Y]^{\omega N}$.

Finally, for every $n < \omega$ we have $h_n\notin \bigcup_{m < n}\overline{U_m}$, so
$h_n\in \overline{(K_n\setm \bigcup_{m\le n} U_m)} \subs \overline{K}$, consequently
$H\subs \overline{K}$,  completing the proof.
\end{proof}

Our next preliminary step concerns a general construction that has as its input
a Tychonov space $X$ and a (Hausdorff) compactification $cX$ of $X$. Its output is a Tychonov space $M(X,cX)$
that is locally compact if so is $X$ and zero-dimensional if so is $cX$.

Let $\mathbb{C}$ denote the Cantor set and let $Y$ be the one-point compactification of the
locally compact and $\omega$-bounded product space $\omega_1 \times \mathbb{C}$. So $Y$ is what we may call the
``long Cantor set".
The compactifying point at infinity in $Y$ will be denoted by $y$.

\begin{definition}\label{M}
Let $X,\,cX,\,Y$ and $y \in Y$ be as above. Then we put $$M(X,cX) = \big[ cX\,\times\,(Y \setm \{y\})\big] \cup (X\,\times\,\{y\}),$$
considered as a subspace of $cX \times Y$.
\end{definition}

Note that $M(X,cX)$ is crowded, irrespective of whether $X$ is or not.
Note also that $X\,\times\,\{y\}$ is a closed and nowhere dense homeomorphic copy of $X$ in $M(X,cX)$,
moreover, $cX\,\times\,(Y \setm \{y\})$ is clearly $\omega$-bounded. The following proposition is an
immediate consequence of these two facts.

\begin{proposition}\label{M-appl}
Let $P$ be any of the properties appearing in the diagram of Figure 1.
\begin{enumerate}[(1)]
\item If $X$ is $P$-bounded then so is $M(X,cX)$.

\smallskip

\item If $X$ is not $P$-bounded then $M(X,cX)$ is not $(P \wedge \omega N)$-bounded.
In particular, if $X$ is not $\omega$-bounded then $M(X,cX)$ is not $\omega N$-bounded.
\end{enumerate}
\end{proposition}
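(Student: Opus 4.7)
Set $M := M(X,cX)$. The plan is to split any subspace $A \subseteq M$ as $A = A_1 \cup A_2$, where $A_1 = A \cap (cX \times (Y \setminus \{y\}))$ and $A_2 = A \cap (X \times \{y\})$, and to reduce the two halves to the two facts already in hand: $cX \times (Y \setminus \{y\})$ is $\omega$-bounded, and $X \times \{y\}$ is closed and nowhere dense in $M$. Writing $A_2 = B \times \{y\}$ with $B \subseteq X$, a direct product calculation using $\overline{B}^{cX} \cap X = \overline{B}^X$ yields $\overline{A_2}^M = \overline{B}^X \times \{y\}$, canonically homeomorphic to $\overline{B}^X$; so compactness of $\overline{A_2}^M$ in $M$ is equivalent to compactness of $\overline{B}^X$ in $X$.

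\textbf{Part (1).} Assume $X$ is $P$-bounded, take $A \in [M]^P$, and aim to show $\overline{A_1}^M$ and $\overline{A_2}^M$ are both compact, whence so is $\overline{A}^M$. For $A_1$: every $P$ in the diagram forces $A_1$ to contain a countable $D$ with $A_1 \subseteq \overline{D}$, either because $A_1$ itself is countable (when $P$ involves $\omega D$ or $\omega N$) or because $A_1$ is second countable hence separable (when $P$ involves $C_2$, by heredity of second countability); $\omega$-boundedness of $cX \times (Y \setminus \{y\})$ then makes $\overline{D}$, and hence $\overline{A_1}^M$, compact. For $A_2$: the set $B$ inherits property $P$ from $A$ as a subspace of $X$ (countability, discreteness and second countability are all hereditary; the nowhere-denseness clause comes for free in the intended crowded setting, since any countable subset of a crowded $X$ is automatically nowhere dense in $X$), so $P$-boundedness of $X$ yields $\overline{B}^X$ compact. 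Conjunctions of properties appearing in the diagram are handled componentwise.

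\textbf{Part (2).} Starting from $A \in [X]^P$ with $\overline{A}^X$ non-compact, I build a concrete witness $A' \subseteq M$. If $P$ does not already force $A$ countable (the case $P = C_2$), replace $A$ by a countable dense $D \subseteq A$, still in $[X]^P$ and with $\overline{D}^X = \overline{A}^X$; otherwise set $D := A$. Now put $A' := D \times \{y\}$; then $A'$ is countable and sits inside the closed nowhere dense $X \times \{y\}$, so $A' \in [M]^{\omega N}$, and it inherits $P$ via the homeomorphism $X \cong X \times \{y\}$, placing it in $[M]^{P \wedge \omega N}$. By the closure identity above, $\overline{A'}^M \cong \overline{A}^X$ is not compact, so $M$ fails $(P \wedge \omega N)$-boundedness, and in particular (taking $P = $ ``countable'') failure of $\omega$-boundedness of $X$ yields failure of $\omega N$-boundedness of $M$. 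The only mildly delicate step is the nowhere-denseness clause for $B$ in Part (1), which is the reason the crowded-$X$ hypothesis implicit in the paper's intended examples is convenient; everything else is bookkeeping on top of the two preliminary facts about $M(X,cX)$.
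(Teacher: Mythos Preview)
Your overall decomposition is exactly what the paper has in mind (it asserts the proposition as an ``immediate consequence'' of the two facts you isolate, with no further detail), and your argument for Part~(2) and for Part~(1) in the cases $P\in\{\omega,\,\omega D,\,C_2\}$ is correct.

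There is, however, a genuine error in your handling of Part~(1) for $P=\omega N$ (and hence also $P=\omega N\wedge C_2$). You write that ``the nowhere-denseness clause comes for free in the intended crowded setting, since any countable subset of a crowded $X$ is automatically nowhere dense in $X$.'' This is false: $\mathbb{Q}$ is a countable dense subset of the crowded space $\mathbb{R}$ (or of itself). What is true is that countable \emph{discrete} subsets of a crowded regular space are nowhere dense --- which is why the $\omega D$ case goes through --- but nothing forces your $B$ to be discrete when $A$ is merely countable and nowhere dense in $M$.

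In fact this gap is not repairable, because Part~(1) as literally stated is \emph{false} for $P=\omega N$. Take the space $X=\cl^{\omega N}_{\beta\mathbb{Q}}(\mathbb{Q})$ of Example~2.7: it is $\omega N$-bounded but not $\omega$-bounded. Then $\mathbb{Q}\times\{y\}$ is countable and nowhere dense in $M(X,\beta\mathbb{Q})$ (since it sits inside the nowhere dense set $X\times\{y\}$), yet its closure in $M$ is $X\times\{y\}$, which is not compact. Equivalently, the ``in particular'' clause of Part~(2) already tells you $M$ is not $\omega N$-bounded whenever $X$ is not $\omega$-bounded, directly contradicting Part~(1) for $P=\omega N$ unless $X$ happens to be $\omega$-bounded. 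The paper never actually invokes Part~(1) with $P=\omega N$ (in Examples~2.8 and~3.2 it passes through $\omega D$ instead), so this oversight is harmless for the applications; but you should flag it rather than paper over it with an incorrect lemma.
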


We shall also make use of the following simple preparatory lemma.

\begin{lemma}\label{chi}
Let $X$ be a regular space all whose points have uncountable character.
Then every first countable subspace $Y$ of $X$ is nowhere dense in $X$.
\end{lemma}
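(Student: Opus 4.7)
The plan is to argue by contradiction: assume $Y$ is not nowhere dense in $X$, so there exists a nonempty open $U \subs X$ with $U \subs \overline{Y}$. Then $U \cap Y$ is dense in $U$, so we may pick a point $y \in U \cap Y$. The goal is to use the countable character of $y$ in the subspace $Y$, together with the regularity of $X$ and the fact that $U \subs \overline{Y}$, to manufacture a countable neighborhood base of $y$ in $X$, contradicting the hypothesis.

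To do this, first fix a countable neighborhood base $\{V_n : n < \omega\}$ of $y$ in $Y$ and write $V_n = W_n \cap Y$ with $W_n$ open in $X$. By regularity of $X$ applied to $y \in W_n \cap U$, choose open sets $W'_n$ in $X$ with
$$y \in W'_n \subs \overline{W'_n} \subs W_n \cap U.$$
I claim that $\{W'_n : n < \omega\}$ is a neighborhood base of $y$ in $X$. Given an arbitrary open $G \ni y$, apply regularity once more to produce an open $G'$ with $y \in G' \subs \overline{G'} \subs G \cap U$. Since $G' \cap Y$ is a neighborhood of $y$ in $Y$, there is $n$ with $V_n = W_n \cap Y \subs G' \cap Y$.

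The key observation is now the following: because $W'_n \subs U \subs \overline{Y}$ and $W'_n$ is open, for every $z \in W'_n$ and every open neighborhood $N$ of $z$ the set $N \cap W'_n$ is a neighborhood of $z$ and hence meets $Y$, which yields $z \in \overline{W'_n \cap Y}$. Therefore
$$W'_n \subs \overline{W'_n \cap Y} \subs \overline{W_n \cap Y} \subs \overline{G' \cap Y} \subs \overline{G'} \subs G,$$
as required. Thus $y$ has countable character in $X$, contradicting the hypothesis, and the lemma follows.

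The only real obstacle is the step showing that the $W'_n$ form a base in $X$: a priori, $W'_n$ could contain points outside $Y$ that escape from $G$. Regularity alone isn't enough to prevent this; one must exploit the density of $Y$ in the witnessing open set $U$ to conclude $W'_n \subs \overline{W'_n \cap Y}$, which is what forces $W'_n$ into $G$.
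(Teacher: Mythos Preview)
Your proof is correct and follows the same idea as the paper's: if $Y$ were somewhere dense, then a point of $Y$ lying in the interior of $\overline{Y}$ would have the same character in $X$ as in $Y$. The paper simply cites this as a known cardinal-function fact (2.7 of \cite{J1}), whereas you spell out the argument from scratch using regularity and density; the essential content is identical. As a small remark, the regularity step in choosing the $W'_n$ is not actually needed---taking $W'_n = W_n \cap U$ already suffices---while the use of regularity to pass from $G$ to $G'$ is exactly where it is required.
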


\begin{proof}
Assume, on the contrary, that $U \subs X$ is non-empty open and $Y \cap U$
is dense in $U$. But then, see e.g. 2.7 of \cite{J1}, for each point $y \in Y \cap U$
we have $\chi(y,Y) = \chi(y,U) = \chi(y,X) > \omega$, a contradiction.
\end{proof}

We are now ready to produce (in ZFC) the examples that separate the classes from our diagram.

\begin{example}\label{cc-notwD}
Let $K$ be a zero-dimensional, crowded compactum in which all infinite closed sets have size $> \mathfrak{c}$.
(The \v Cech-Stone remainder $\omega^*$ is such.) Then $K$ has a countable crowded subset $S$
and there is a countably compact subspace $X$ such that $S \subs X \subs \overline{S}$ and
$|X| = \mathfrak{c}$. Thus, $X$ is a countably compact, zero-dimensional, crowded space in which
all compact subsets are finite, hence it is clearly not $\omega D$-bounded.
\end{example}

Our next example will be $\omega N$-bounded but not $C_2$-bounded. It will be a subspace of
$\beta \mathbb{Q}$, the \v Cech-Stone compactification of the rationals.

\begin{example}\label{wN-notC2}
Let us put $X = cl^{\omega N}_{\beta \mathbb{Q}}\big ( \mathbb{Q}\big )$, the $\omega N$-closure
of $\mathbb{Q}$ in $\beta \mathbb{Q}$. Then, by lemma \ref{wN-closed}, $X$ is $\omega N$-closed
in $\beta \mathbb{Q}$, hence $X$ is $\omega N$-bounded.

On the other hand, by theorem 1.5 of \cite{vD} $\,\mathbb{Q}$ has a remote point $p$ because
it is not pseudocompact and has countable ($\pi$-)weight. But then $p \notin X$, hence
$X$ is not compact while the second countable $\mathbb{Q}$ is dense in $X$.
So, $X$ is not $C_2$-bounded.
\end{example}

We can now apply our ``M-machine" from \ref{M} to the space
$X = cl^{\omega N}_{\beta \mathbb{Q}}\big ( \mathbb{Q}\big )$ and its compactification $\beta \mathbb{Q}$.

\begin{example}\label{wD-notwN&C2}
The space $M(cl^{\omega N}_{\beta \mathbb{Q}}\big ( \mathbb{Q}\big ),\beta \mathbb{Q}\,)$
is $\omega D$-bounded by part (1) of \ref{M-appl} and it is
not $(\omega N \wedge C_2)$-bounded by part (2) of \ref{M-appl}.
\end{example}

Our aim next is to produce a space that is both $\omega N$-bounded and $C_2$-bounded but
not $\omega$-bounded. To do that, we first refer to \cite{d-forced} where it was shown
that there is a countable dense subspace $S$ of the Cantor cube $2^\mathfrak{c}$ which is
{\em nodec}, that is every nowhere dense subset of $S$ is closed in $S$.
(In fact, the space $S$ constructed there has a stronger property: it is submaximal.) Let us pick a member
$a \in S$ and consider $X = \cl^{\omega N}_{2^\mathfrak{c}}(S \setm \{a\})$, the $\omega N$-closure
of $S \setm \{a\}$ in $2^\mathfrak{c}$.

Applying lemma \ref{wN-closed}, we can see that $X$ is $\omega N$-closed in $2^\mathfrak{c}$,
and hence it is $\omega N$-bounded. Since $S$ is nodec, it follows that $a \notin \overline{T}$
for every (countable) nowhere dense subset of $S \setm \{a\}$, hence $a \notin X$. But $S \setm \{a\}$
is still dense in $2^\mathfrak{c}$ as well as in $X$, consequently $X$ is not $\omega$-bounded
because it is not compact.

Note that, as $X$ is dense in $2^\mathfrak{c}$, for all points $x \in X$ we have $\chi(x,X) = \mathfrak{c}$.
Consequently, every (countable and) second countable subspace $T$ of $X$ is nowhere dense by lemma
\ref{chi}. But this implies that $\overline{T}$ is compact because $X$ is already known to be
$\omega N$-bounded, and thus we see that $X$ is also $C_2$-bounded.

Putting all this together we get what we wanted.

\begin{example}\label{wN-C2-notw}
The zero-dimensional space $X = \cl^{\omega N}_{2^\mathfrak{c}}(S \setm \{a\})$
is both $\omega N$-bounded and $C_2$-bounded but not $\omega$-bounded.
\end{example}

Since $2^\mathfrak{c}$ is a compactification of the space $X$ from our last example
\ref{wN-C2-notw}, we may apply our ``$M$-machine" to $X$ and $2^\mathfrak{c}$
similarly as we did in example \ref{wD-notwN&C2}. This, in view of
lemma \ref{M-appl}, will preserve the $C_2$-bounded property of $X$ and will turn
its lack of $\omega$-boundedness into the lack of $\omega N$-boundedness.

\begin{example}\label{C2-notwN}
 The zero-dimensional space $M(X,2^\mathfrak{c})$ is $C_2$-bounded but not $\omega N$-bounded.
\end{example}

Finally, we note that if $H$ is any space that is $\omega N$-bounded
but not $C_2$-bounded and, conversely, $K$ is $C_2$-bounded but not $\omega$-bounded, then their
topological sum $H \oplus K$ is clearly $(\omega N \wedge C_2)$-bounded but neither
$\omega N$-bounded nor $C_2$-bounded. Thus we have our last separating example.

\begin{example}\label{last}
The topological sum of the spaces from example \ref{wN-notC2} and from example \ref{C2-notwN}
is a zero-dimensional crowded space that is $(\omega N \wedge C_2)$-bounded but neither
$\omega N$-bounded nor $C_2$-bounded.
\end{example}

It is well-known that each crowded, countably compact, regular space has cardinality at least $\,\mathfrak{c}$.
However, all our above examples with the exception of \ref{cc-notwD} are of cardinality $2^\mathfrak{c}$.
We do not know if this is really necessary, hence we formulate the following problem.

\begin{problem}
Are there, in ZFC, separating examples like 2.7 -- 2.11 of cardinality less than $2^\mathfrak{c}$?
\end{problem}

Consistently, at least in some cases, we can produce such separating examples of size less than $2^\mathfrak{c}$.
This is the topic of the next section.

\section{Small and/or locally compact separating examples}

Let us note, first of all, that the well-known Franklin-Rajagopalan space
(usually denoted by $\gamma \mathbb{N}$,
see e.g. \cite{FrRa}) that is   
obtained from a tower $\{A_\alpha : \alpha < \mathfrak{t} \} \subs [\omega]^\omega$ is locally
compact, sequentially compact, hence countably compact, but not $\omega D$-compact. Of course, this space is not crowded
but by taking its product with, say, the Cantor set we get a similar crowded and zero-dimensional example.
Moreover, as $|\mathbb{C} \times \gamma \mathbb{N}| = \mathfrak{c}$, this example is
also small, just like the similar, but not locally compact, example in \ref{cc-notwD}.
It is worth mentioning that if $\mathfrak{t} = \omega_1$ then this example is also first countable.

A somewhat similar construction has recently been carried out in \cite{HG}, however not in ZFC but
using the extra assumption $\mathfrak{p} = \cof(\mathcal{M})$. This yields a locally compact Hausdorff
space $X$ on the underlying set $(\omega \times \mathbb{C}) \cup \mathfrak{p}$ such that
$(\omega \times \mathbb{C})$ in its natural topology is dense open in $X$, the closed
nowhere dense subspace $\mathfrak{p}$ carries its ordinal topology, hence $X$ is not compact,
moreover $X$ is $\omega N$-bounded. Since $(\omega \times \mathbb{C})$ is a dense $C_2$ subspace
of $X$ this yields us the following.

\begin{example}\label{mex}
If $\,\,\mathfrak{p} = \cof(\mathcal{M})$ then $X$ from \cite{HG} above is a zero-dimensional, crowded,
locally compact space of cardinality $\mathfrak{c}$
that is $\omega N$-bounded but not $C_2$-bounded. If, in addition, $\mathfrak{p} = \cof(M) = \omega_1$
then $X$ is also first countable.
\end{example}

Now, we may again apply our $M$-machine to $X$ and its one-point compactification $\alpha X$
to obtain the following:

\begin{example}\label{mex+}
If $\,\,\mathfrak{p} = \cof(\mathcal{M})$ then
$M(X, \alpha X)$ is a crowded, zero-dimensional, locally compact
space of cardinality $\mathfrak{c}$  
 that is $\omega D$-bounded but not $(C_2 \wedge \omega N)$-bounded.
\end{example}

\begin{definition}\label{touch}
Let $X$ be a compactum and $x \in X$ a non-isolated point. For $P$, a property of subspaces,
we say that $x$ is $P$-touchable iff there is 
$Y\in [X \setm \{x\}]^P$ 
such that $x \in \overline{Y}$.
\end{definition}

Consequently, the task to find a locally compact Hausdorff space that is $P$-bounded but not $Q$-bounded
is equivalent to the task of finding a $Q$-touchable but not $P$-touchable point in a compactum.
Luckily for us, the latter task has already been dealt with, mainly by J. van Mill.

In particular, he proved in \cite{vM82} that in the \v Cech-Stone remainder $\omega^*$ there is a
point that is $\omega D$-untouchable but $\omega$-touchable ($\equiv$ not a weak P) point.
Since $\omega^*$ has no non-trivial convergent sequences, it is easy to see that every $C_2$ subspace of $\omega^*$
is $\omega D$, hence we have the following.

\begin{example}\label{vM}
If $p \in \omega^*$ is as above then $\omega^* \setm \{p\}$ is a locally compact space
that is $C_2$-bounded but not $\omega$-bounded, in fact not even $\omega N$-bounded.
\end{example}

We do not know if there is a ZFC example of a locally compact space that is both
$\omega N$-bounded and $C_2$-bounded but not $\omega$-bounded. However, such an example
exists if we assume $\mathfrak{b} = \mathfrak{c}$. Indeed, by theorem 9.1 of \cite{vM1},
$\mathfrak{b} = \mathfrak{c}$ implies that if $X$ is any non-pseudocompact Tychonov space
then there is a point in $x \in X^* = \beta X \setm X$ that is $\omega N$-untouchable in $\beta X$.
Thus, if $X$ is also separable, then $\beta X \setm \{x\}$ is $\omega N$-bounded and
not $\omega$-bounded.

\begin{example}\label{vM-b=c}
Assume $\mathfrak{b} = \mathfrak{c}$ and
apply the above argument to the separable, non-pseudocompact space $X = \omega \times 2^{\omega_1}$
to obtain $x \in X^*$ such that $\beta X \setm \{x\}$ is $\omega N$-bounded
and not $\omega$-bounded.
But now all points
of $\beta X$ have uncountable character, hence, by lemma \ref{chi}, every $C_2$ subspace of it is nowhere dense.
Consequently,  $\beta X \setm \{x\}$ is also $C_2$-bounded.
\end{example}

Similarly as we obtained example \ref{C2-notwN} from example \ref{wN-C2-notw} by running the latter
through our "$M$-machine", we get from example \ref{vM-b=c} the following.

\begin{example}\label{vM-b=c_2}
Assume $\mathfrak{b} = \mathfrak{c}$. Then for $X = \omega \times 2^{\omega_1}$ and $x \in X^*$
as in example \ref{vM-b=c}, the locally compact space $M(\beta X \setm \{x\}, \beta X\,)$
is $C_2$-bounded but not $\omega N$ bounded.
\end{example}

Now, if the continuum hypothesis holds, then both $\mathfrak{b} = \mathfrak{c}$
and $\mathfrak{p} = \cof(\mathcal{M}) = \omega_1$ are valid, hence we get the following.

\begin{example}\label{CH}
If the continuum hypothesis holds then the topological sum of the spaces in \ref{vM-b=c_2} and \ref{mex} is
a locally compact, crowded, zero-dimensional space that is $(\omega N \wedge C_2)$-bounded, but it is neither
$\omega N$-bounded nor $C_2$-bounded.
\end{example}

We end this section with a result that yields consistent small
examples similar to examples
\ref{wN-C2-notw} and \ref{C2-notwN}. The simple idea is to find  $\lambda < \mathfrak{c}$
such that the Cantor cube $\mathbb{C}_\lambda$ of weight $\lambda$ has a countable, dense,
nodec subspace. Then these examples can be produced in exactly the same way. (We shall use the
more general notation $\mathbb{C}_I$ instead of $2^I$.) So, for instance, $2^{\omega_1}$ will
be used to denote the cardinality of $\mathbb{C}_{\omega_1}$.

Let us note, to begin with, that if $\lambda < \mathfrak{p}$ then every countable, dense
subspace of $\mathbb{C}_\lambda$ is Fr\`echet, hence cannot be nodec. Consequently, the above plan
can not be carried out in ZFC, so we only may hope for a consistency result.

In what follows, we shall denote by $\mathcal{N}(X)$ the ideal of all
nowhere dense subsets of the space $X$. Moreover, the cofinality of the ideal $\mathcal{N}(\mathbb{C}_\lambda)$
will be denoted by $\nu_\lambda$.

\begin{theorem}\label{cof(M)=w_1}
If  $\,\lambda = \nu_\lambda < \mathfrak{c}$ then the Cantor cube $\mathbb{C}_\lambda$
has a countable dense subspace that is {\em nodec}.
\end{theorem}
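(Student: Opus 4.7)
The plan is to proceed in two steps: a reformulation of the nodec property for a candidate $S$ using the cofinal family of closed nowhere dense sets whose existence is guaranteed by $\nu_\lambda = \lambda$, and then an $\omega$-step recursive construction of $S$.

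Fix a cofinal family $\{F_\alpha : \alpha < \lambda\}$ in $\mathcal{N}(\mathbb{C}_\lambda)$; closing under finite unions keeps its size at $\lambda$, so we may take the family to be $\subseteq$-directed, with each $F_\alpha$ closed and nowhere dense. My first claim is that a countable dense $S \subseteq \mathbb{C}_\lambda$ is \emph{nodec} iff, for every $\alpha < \lambda$, the trace $F_\alpha \cap S$ is closed and discrete in $S$. The nontrivial direction uses that any nowhere dense $A \subseteq S$ is nowhere dense in $\mathbb{C}_\lambda$ (because $S$ is dense), so $A \subseteq F_\alpha$ for some $\alpha$ by cofinality, and closed discreteness of $F_\alpha \cap S$ in $S$ descends to the subset $A$, forcing $A$ closed in $S$.

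I would then build $S = \{s_n : n < \omega\}$ by recursion. Separability of $\mathbb{C}_\lambda$ (from $\lambda \leq \mathfrak{c}$) supplies a countable dense subset whose enumeration drives the density target at each step. The recursion maintains an accumulating finite list of ``isolation commitments'': pairs $(V, F)$, with $V$ clopen in $\mathbb{C}_\lambda$ and $F$ in the cofinal family, expressing that any subsequently chosen $s_m$ lying in $V$ must miss $F$. At step $n$ I pick $s_n$ in the prescribed basic clopen neighborhood minus the finite union $\bigcup\{V \cap F : (V,F) \text{ active}\}$; such a point exists because that union is a closed nowhere dense set, so the prescribed basic clopen minus it is still dense open. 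Having picked $s_n$, I shrink a clopen neighborhood $V_n$ of $s_n$ separating $s_n$ from the finite set $\{s_0,\ldots,s_{n-1}\} \cap F_{\alpha_n}$, and add the new commitment $(V_n, F_{\alpha_n})$, where the index $\alpha_n < \lambda$ is taken from a pre-specified enumeration designed to exhaust the cofinal family in a diagonal pattern.

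The main obstacle is closing the gap between the $\omega$ steps of the recursion and the $\lambda$-many isolation conditions required. My strategy is to observe that once $S$ is fixed, clopen isolation witnesses for unprocessed pairs $(s_n, \alpha)$ can be produced post hoc whenever $s_n \notin \overline{F_\alpha \cap S \setminus \{s_n\}}$, so the recursion's real burden is to secure this ``no-accumulation'' condition uniformly in $\alpha$. The hypothesis $\lambda = \nu_\lambda$ enters here: the cofinal family can be enumerated and woven into the bookkeeping so that every $\alpha < \lambda$ is eventually dominated in the directed order by some processed $\alpha_n$, transferring the concrete isolation commitments for $F_{\alpha_n}$ back to the abstract no-accumulation condition for $F_\alpha$. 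I expect this bridging---matching the countable recursion to the $\lambda$-sized cofinal family via directedness of $\mathcal{F}$ and diagonalization---to be the technically most delicate step.
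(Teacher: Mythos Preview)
Your reformulation is correct: for a countable dense $S \subseteq \mathbb{C}_\lambda$, nodec is equivalent to each trace $F_\alpha \cap S$ being closed discrete in $S$. The gap is precisely in the bridging step you yourself flag as delicate. You propose that the countably many processed indices $\{\alpha_n : n < \omega\}$ can be arranged so that every $\alpha < \lambda$ is dominated by some $\alpha_n$ in the $\subseteq$-order on $\{F_\alpha\}$. But this would make $\{F_{\alpha_n} : n < \omega\}$ cofinal in your directed family, hence cofinal in $\mathcal{N}(\mathbb{C}_\lambda)$, forcing $\nu_\lambda \le \omega$. Since $\nu_\omega = \cof(\mathcal{M}) \ge \omega_1$, the hypothesis $\lambda = \nu_\lambda$ actually entails $\lambda \ge \omega_1$, so no countable subfamily can be cofinal and the transfer of isolation commitments from the $F_{\alpha_n}$ to an arbitrary $F_\alpha$ simply fails. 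An $\omega$-step recursion cannot discharge $\lambda$-many genuinely independent no-accumulation conditions by this route; no diagonalization or bookkeeping trick on $\omega$ can produce a cofinal subset of a directed family of cofinality $\lambda > \omega$.

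The paper sidesteps this cardinality mismatch by not constructing $S$ from scratch. It starts from a countable dense nodec $S \subseteq \mathbb{C}_{\mathfrak{c}}$, whose existence is taken from earlier work, and then runs a transfinite recursion of length $\omega_1$ on the \emph{index set}: an increasing chain $I_\xi \in [\mathfrak{c}]^\lambda$ is built via a closure operator $J \mapsto J^+$ that, for each $H$ in a fixed $\lambda$-sized cofinal family $\mathcal{H}_J \subseteq \mathcal{N}(\mathbb{C}_J)$, adjoins a countable support $E(S \cap [H])$ witnessing discreteness of $S \cap [H]$. The union $I = \bigcup_{\xi < \omega_1} I_\xi$ has size $\lambda$, and CCC of $\mathbb{C}_I$ reflects every nowhere dense set down to some $I_\xi$, where the closure step has already secured discreteness of the relevant trace in $\pi_I[S]$. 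Here the hypothesis $\lambda = \nu_\lambda$ is used only to keep each $\mathcal{H}_J$, and hence each $I_\xi$, of size $\lambda$---not to shrink anything to countable size.
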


\begin{proof}
Let us start with what we know: a countable, dense, nodec subspace $S$ of $\mathbb{C}_\mathfrak{c}$.
We shall prove that there is a set $I \in [\mathfrak{c}]^\lambda$ such that the projection
$\pi_I[S]$ of $S$ to this set of co-ordinates is nodec. Since $\pi_I[S]$ is clearly countable and
dense in $\mathbb{C}_I$, this will complete our proof.

As each $T \in \mathcal{N}(S)$ is discrete, we may fix for $T$ a countable set of co-ordinates
$E(T) \in [\mathfrak{c}]^\omega$ such that the projection $\pi_{E(T)}$ is one-to-one on $T$
and the image $\pi_{E(T)}[T]$ is discrete in $\mathbb{C}_{E(T)}$. Note that then $\pi_{J}[T]$ is
also discrete in $\mathbb{C}_J$ whenever $E(T) \subs J \subs \mathfrak{c}$.

Also, it follows from $\nu_\lambda = \lambda$ that for every set of co-ordinates
$J \in [\mathfrak{c}]^\lambda$ of size $\lambda$ we can fix
a collection $\mathcal{H}_J \in [\mathcal{N}(\mathbb{C}_J)]^\lambda$ that is cofinal in
$\mathcal{N}(\mathbb{C}_J)$.

For any set $J \subs \mathfrak{c}$ and partial function $y \in \mathbb{C}_J$ we write
$[y] = \{x \in \mathbb{C}_\mathfrak{c} : y \subs x\}$. Moreover, for any subset $H \subs \mathbb{C}_J$
we put $[H] = \cup \{[y] : y \in H \}$. Clearly, if $H \subs \mathbb{C}_J$ is nowhere dense in
$\mathbb{C}_J$ then $[H]$ is nowhere dense in $\mathbb{C}_\mathfrak{c}$. This implies that the
following operation $J \mapsto J^+$ on  $[\mathfrak{c}]^\lambda$ is well-defined.

For any $J \in [\mathfrak{c}]^\lambda$ we set $$J^+ = J \cup \bigcup \{E(S \cap [H]) : H \in \mathcal{H}_J \}\,.$$

We now define sets $I_\xi \in [\mathfrak{c}]^\lambda$ by transfinite recursion for $\xi < \omega_1$
as follows. $I_0$ is an arbitrary member of $[\mathfrak{c}]^\lambda$. If $I_\xi \in [\mathfrak{c}]^\lambda$
has been defined then $I_{\xi+1} = (I_\xi)^+$. Finally, if $\xi < \omega_1$ is limit then we put
$I_\xi = \cup_{\eta < \xi} I_\eta$.

We claim that $I = \cup_{\xi < \omega_1} I_\xi \in [\mathfrak{c}]^\lambda$ is as required, i.e.
$\pi_I[S]$ is nodec. To see this, note first that those members of $\mathcal{N}(\mathbb{C}_I)$
that depend only on a countable number of co-ordinates are cofinal in $\mathcal{N}(\mathbb{C}_I)$.
This is immediate from the fact any dense open subset of $\mathbb{C}_I$ includes a dense open set
that is the countable union of elementary open sets, for $\mathbb{C}_I$ is CCC.

But now, if $Z \in \mathcal{N}(\mathbb{C}_I)$ depends only on the countable set
of co-ordinates $A \subs I$ then
there is some $\xi < \omega_1$ with $A \subs I_\xi$. Clearly, then the projection $Y$ of $Z$
to $\mathbb{C}_{I_\xi}$ is nowhere dense there. Consequently, there is $H \in \mathcal{H}_{I_\xi}$
with $Y \subs H$. By $A \subs I_\xi$ this implies $[Z] \subs [H]$.
Now, by definition, we have $E(S \cap [H]) \subs I_{\xi+1} \subs I$ which implies that the projection
$\pi_I \big [S \cap [H] \big]$ is discrete, hence so is its subset $Z \cap \pi_I[S]$.

This immediately implies that all members of $\mathcal{N}(\pi_I[S])$ are discrete, and hence
also closed-and-discrete. Consequently, $\pi_I[S]$ is indeed nodec.

\end{proof}

It is known that if one adds Sacks reals side-by-side to a model of GCH then in the resulting
generic extension we have $\mathfrak{c}=2^{{\omega}_1}$. Moreover,
in this model $\cof(\mathcal{M}) = \omega_1$ hols, see corollary 66 in \cite{GQ}.
But by 1.5 and 1.6 of \cite{BaHEHr} we have $\cof(\mathcal{M}) = \nu_\omega$, 
consequently $\nu_\omega = \nu_{\omega_1} = \omega_1$
holds. This shows that the assumption of theorem \ref{cof(M)=w_1} is meaningful.

The same arguments that we used to obtain examples \ref{wN-C2-notw} and \ref{C2-notwN} now yields
the following results.

\begin{corollary}
If  $\,\lambda = \nu_\lambda < \mathfrak{c}$ then the Cantor cube $\mathbb{C}_\lambda$
has a separable, dense subspace $X$ that is both $\omega N$-bounded and $C_2$-bounded but not
compact, hence not $\omega$-bounded.  Moreover, then $M(X,\mathbb{C}_\lambda)$ is $C_2$-bounded but not
$\omega N$-bounded. In particular, in the generic extension obtained by
adding many Sacks reals side-by-side to a model of GCH, 
there are examples like these of size $\mathfrak{c}$.
\end{corollary}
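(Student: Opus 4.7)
The plan is to carry out exactly the construction of Examples \ref{wN-C2-notw} and \ref{C2-notwN} with $2^{\mathfrak{c}}$ replaced by the smaller cube $\mathbb{C}_\lambda$. First, I would invoke Theorem \ref{cof(M)=w_1} to fix a countable, dense, nodec subspace $S\subs\mathbb{C}_\lambda$; note that $\lambda$ is automatically uncountable, since $\nu_\omega=\cof(\mathcal{M})>\omega$. As $\mathbb{C}_\lambda$ has no isolated points, no point of the dense set $S$ is isolated in $S$, so for any choice of $a\in S$ the set $S\setm\{a\}$ is still dense in $\mathbb{C}_\lambda$. Define $X=\cl^{\omega N}_{\mathbb{C}_\lambda}(S\setm\{a\})$; Lemma \ref{wN-closed} then yields that $X$ is $\omega N$-closed in the compactum $\mathbb{C}_\lambda$, hence $\omega N$-bounded.

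The substantive step is verifying $a\notin X$. For this, take any $T\in [S\setm\{a\}]^{\omega N}$: being countable and nowhere dense in $\mathbb{C}_\lambda$, $T$ is also nowhere dense in the dense subspace $S$, and the nodec hypothesis then forces $T$ to be closed in $S$. Consequently some open $V\ni a$ in $\mathbb{C}_\lambda$ satisfies $V\cap T=V\cap S\cap T=\empt$, so $a\notin\overline{T}$. Thus $X$ is a proper dense subspace of $\mathbb{C}_\lambda$, therefore non-compact and \emph{a fortiori} not $\omega$-bounded. Density of $X$ also gives $\chi(x,X)=\chi(x,\mathbb{C}_\lambda)=\lambda>\omega$ for every $x\in X$, so by Lemma \ref{chi} each second countable subspace of $X$ is nowhere dense and therefore has compact closure, by the $\omega N$-boundedness already established. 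So $X$ is $C_2$-bounded, and the countable dense $S\setm\{a\}\subs X$ witnesses separability.

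The $M$-machine then finishes everything else: Proposition \ref{M-appl}(1) transfers $C_2$-boundedness from $X$ to $M(X,\mathbb{C}_\lambda)$, while part (2), applied to $X$'s failure of $\omega$-boundedness, yields that $M(X,\mathbb{C}_\lambda)$ is not $\omega N$-bounded. For the Sacks-model clause, the equalities $\nu_\omega=\nu_{\omega_1}=\omega_1<\omega_2=2^{\omega_1}=\mathfrak{c}$ quoted just before the corollary show that $\lambda=\omega_1$ satisfies the hypothesis. Then $|X|\le|\mathbb{C}_{\omega_1}|=\mathfrak{c}$, and since $X$ is regular, crowded and countably compact, the well-known lower bound $|X|\ge\mathfrak{c}$ pins the cardinality at $\mathfrak{c}$; the same bookkeeping applies to $M(X,\mathbb{C}_{\omega_1})\subs\mathbb{C}_{\omega_1}\times Y$, whose size is again $\mathfrak{c}$. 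The only genuinely delicate point is the middle step: one has to observe that ``nowhere dense in $\mathbb{C}_\lambda$'' restricts to ``nowhere dense in $S$'' so that the nodec hypothesis can bite and expel $a$ from $X$; everything else is a formal application of Lemmas \ref{wN-closed}, \ref{chi}, Proposition \ref{M-appl}, and Theorem \ref{cof(M)=w_1}.
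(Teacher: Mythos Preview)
Your proposal is correct and follows exactly the approach the paper intends: it reproduces the constructions of Examples \ref{wN-C2-notw} and \ref{C2-notwN} verbatim with $2^{\mathfrak c}$ replaced by $\mathbb{C}_\lambda$, which is precisely what the text before the corollary instructs. Your added justifications (that $\lambda>\omega$ from $\lambda=\nu_\lambda$, that nowhere dense in $\mathbb{C}_\lambda$ restricts to nowhere dense in the dense subspace $S$ so the nodec hypothesis applies, and the size computation in the Sacks model) are all correct and fill in details the paper leaves implicit.
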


To conclude this section we recall from \cite{J2} that the existence of an $HFD$ space of size
$\mathfrak{c}$ implies the existence of a countably compact $HFD$, which is, on one
hand hereditarily separable, and on the other hand it has no infinite compact subspaces.
(We know that there are many models of set theory
in which $HFD$ spaces of size $\mathfrak{c}$ exist.) Since a countably compact but not compact
space cannot be Lindelöf, such a hereditarily separable regular example is necessarily
an S-space, hence they cannot exist in ZFC.

\section{On  $\omega D$-bounded spaces}

In this section our aim is to show that, while the $\omega D$-bounded property is
quite low in the hierarchy of our boundedness properties, it is actually much stronger
than countable compactness.
The first evidence of this that we give is a strengthening of the well-known fact that
regular and countably compact spaces of Lindelöf degree $< \mathfrak{p}$ are
$\omega$-bounded.

\begin{theorem}\label{Lind}
Regular and $\omega D$-bounded spaces of Lindelöf degree $< \cov(\mathcal{M})$ are $\omega$-bounded.
\end{theorem}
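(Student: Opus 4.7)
My plan is a contradiction argument that parallels the classical theorem that regular countably compact spaces of Lindel\"of degree below $\mathfrak p$ are $\omega$-bounded, while exchanging the pseudo-intersection step for a $\cov(\mathcal M)$-based combinatorial argument that hinges on the stronger hypothesis of $\omega D$-boundedness. Suppose $X$ has a countable subset $A=\{a_n:n<\omega\}$ with $\overline A$ non-compact. Since $\omega D$-boundedness implies countable compactness and $\overline A$ is closed in $X$, the space $\overline A$ is countably compact; moreover $L(\overline A)\le L(X)<\cov(\mathcal M)$. By regularity, I would produce an open cover $\mathcal V=\{V_\alpha:\alpha<\kappa\}$ of $\overline A$ with $\kappa<\cov(\mathcal M)$ and no finite subfamily $\{\overline{V_\alpha}:\alpha\in F\}$ covering $\overline A$; the standard closure argument together with the denseness of $A$ in $\overline A$ then ensures that $B_F:=\{n<\omega:a_n\notin\bigcup_{\alpha\in F}\overline{V_\alpha}\}$ is infinite for every $F\in[\kappa]^{<\omega}$.

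The heart of the proof is to produce an infinite $B^*\subseteq\omega$ such that $D:=\{a_n:n\in B^*\}$ is discrete in $X$ and $B^*\cap B_F\ne\emptyset$ for every $F$. Once $D$ is at hand, $\omega D$-boundedness yields $\overline D$ compact, so some finite $F\subseteq\kappa$ satisfies $\overline D\subseteq\bigcup_{\alpha\in F}V_\alpha$, forcing $D\subseteq\bigcup_{\alpha\in F}\overline{V_\alpha}$ and contradicting the choice of $B^*$.

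To construct $B^*$ I would invoke Bell's characterization of $\cov(\mathcal M)$ as the least cardinal for which Martin's Axiom for countable posets fails. Fix, for each $n$, a countable family $\{W_{n,k}:k<\omega\}$ of open neighborhoods of $a_n$ chosen so that for every finite $S\subseteq\omega\setminus\{n\}$ some $W_{n,k}$ avoids $\{a_m:m\in S\}$ (possible by $T_1$). The countable poset $\mathbb P$ consists of conditions $p=(s_p,\mathcal W_p)$ where $s_p\in[\omega]^{<\omega}$ and $\mathcal W_p$ assigns to each $n\in s_p$ some $W_{n,k(n)}$ subject to the mutual-isolation requirement $a_m\notin W_{n,k(n)}$ for distinct $m,n\in s_p$; extensions must preserve these choices and further guarantee that each new index falls outside every previously chosen witness neighborhood. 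For each $F\in[\kappa]^{<\omega}$, the set $\Delta_F$ of conditions whose $s_p$ meets $B_F$ is dense in $\mathbb P$, and a filter meeting the $\kappa$ many sets $\{\Delta_F:F\in[\kappa]^{<\omega}\}$ produces $B^*$ with the required two properties.

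The principal obstacle is the verification that $\Delta_F$ is actually dense: extending a condition $p$ by some $n\in B_F$ requires that $a_n$ lie outside every already-fixed $W_m^{k(m)}$, and since these neighborhoods may meet $A$ in infinite sets this is not automatic. The delicate point is the careful design of the pools $\{W_{n,k}\}$, together with an auxiliary bookkeeping that preserves the infinitude of $B_F\setminus\bigcup_{m\in s_p}\{n':a_{n'}\in W_m^{k(m)}\}$ as conditions are strengthened. This interplay among the regularity of $X$, the infinitude of the $B_F$'s, and the combinatorics of $\cov(\mathcal M)$ is where the real work of the proof lies.
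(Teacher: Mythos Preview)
Your outline is right and matches the paper's route: pass to $\overline A$, produce an open cover $\{V_\alpha:\alpha<\kappa\}$ with $\kappa<\cov(\mathcal M)$ such that no finite subfamily has closure covering $\overline A$, and invoke Bell's theorem (MA for countable posets below $\cov(\mathcal M)$) to build a countable discrete $D\subseteq A$ that meets every $B_F$. The gap is exactly the one you flag yourself, and it is real, not merely ``delicate''.

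Your poset is over-engineered in a way that makes density of $\Delta_F$ genuinely fail. The pools $\{W_{n,k}\}$ are chosen only using the $T_1$ axiom, so nothing prevents a condition's already-committed neighborhoods $W_{m,k(m)}$ from swallowing all of $B_F$; no ``auxiliary bookkeeping'' can then produce an extension in $\Delta_F$. The missing idea --- and this is precisely the paper's move --- is to let the cover itself supply the isolating neighborhoods. Fix for each $n$ a single $U_n\in\{V_\alpha:\alpha<\kappa\}$ with $a_n\in U_n$, and take as conditions the finite sequences $p\in A^{<\omega}$ satisfying $p(i)\notin\bigcup_{j<i}U_{p(j)}$. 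Discreteness of the generic range is then automatic, since $U_{g(i)}\setminus\{g(j):j<i\}$ isolates $g(i)$ (every later $g(j)$ was chosen outside $U_{g(i)}$). And now density of $\Delta_F$ is immediate: to extend $p$ you only need a point of $A$ outside $\overline{\bigcup_{i\in\dom p}U_{p(i)}\cup\bigcup_{\alpha\in F}V_\alpha}$, and such a point exists precisely because this is the closure of the union of a \emph{finite} subfamily of the cover. Once the isolating sets are cover elements rather than arbitrary $T_1$-separators, your ``principal obstacle'' evaporates.
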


\begin{proof}
Actually, we prove a slightly stronger result: If $X$ is a separable, regular space which admits an open cover
of size $< \cov(\mathcal{M})$ that has no finite subcover then $X$ is not $\omega D$-bounded.

To see this, fix a countable dense subset $S$ of $X$ and, using the regularity of $X$, an open cover $\mathcal{U}$
with $|\mathcal{U}|< \cov(\mathcal{M})$ such that for any finite $\mathcal{V} \subs \mathcal{U}$
we have $\overline{\cup \mathcal{V}} \ne X$. We then pick for each $x \in S$ a member $U_x \in \mathcal{U}$
such that $x \in U_x$.

Now, consider the sub partial order
$\mathbb{P} \subs S^{< \omega}$ consisting of those finite sequences $p \in S^{< \omega}$ that satisfy
$p(i) \notin \cup_{j < i} U_{p(j)}$ for all $i \in \dom(p)$. (Note that $\dom(p) \in \omega$.)

For every finite $\mathcal{V} \subs \mathcal{U}$ we define
$$E_{\mathcal{V}} = \{p \in \mathbb{P} : \exists\,i \in \dom(p)\, (p(i) \notin \overline{\cup \mathcal{V}})\}.$$
Then $E_{\mathcal{V}}$ is dense in $\mathbb{P}$ because, by our assumption, for every $p \in \mathbb{P}$
the closure of $\,\,\bigcup \{U_{p(i)} : i \in \dom(p) \} \cup \mathcal{V}$
does not cover $X$.

\smallskip

Now, $|\{E_\mathcal{V} : \mathcal{V} \in [\mathcal{U}]^{< \omega}\}| < \cov(\mathcal{M})$ implies that
there is a $\mathbb{P}$-generic set $G$ over the family $\{E_\mathcal{V} : \mathcal{V} \in [\mathcal{U}]^{< \omega}\}$.
Clearly, then $g = \cup G \in S^\omega$. We claim that $D = \ran(g) = \{g(i) : i \in \omega\}$ is discrete with non-compact
closure.

That $D$ is discrete follows because for every $i < \omega$ we have $$\{g(i)\} = D \cap U_{g(i)} \setm \{g(j) : j < i\}.$$
If $\overline{D}$ would be compact then we had a finite $\mathcal{V} \subs \mathcal{U}$ with $\overline{D} \subs \cup \mathcal{V}$.
This, however, would contradict $D \setm \cup \mathcal{V} \ne \emptyset$ that, in turn, follows from $G \cap E_{\mathcal{V}} \ne \emptyset$.

\end{proof}

One of the most notorious open problems (worth 1000 US Dollars) concerning countably compact spaces
asks if there is, in ZFC, a first countable, separable, regular, countably compact but non-compact space.
Equivalently, is it consistent that all first countable, regular, countably compact spaces are $\omega$-bounded?
The answer to this question is unknown even if we replace in it first countable with countably tight.
We refer the reader to \cite{Ny} concerning the history and the sate of this problem.

In what follows, we shall show that the situation changes drastically if in this question countably
compact is strengthened to $\omega D$-bounded.
We start by introducing some new notation. Let $X$ be any space, $S \subs X$ be a dense subset and $\mathcal{U}$
be a family of pairwise disjoint non-empty open sets in $X$. We shall denote by $\mathcal{I}(S,\mathcal{U})$
the collection of all those countable sets $D \subs S$ that satisfy $|D \cap U| < \omega$ for all $U \in \mathcal{U}$.
Clearly, $\mathcal{I}(S,\mathcal{U})$ is a subideal of $[S]^{\le \omega}$ and all members of $\mathcal{I}(S,\mathcal{U})$
are discrete subsets of $X$. We also recall that an ideal
$\mathcal{J}$ is a $P$-ideal iff for any $\{A_n : n < \omega \} \subs \mathcal{J}$
there is $A \in \mathcal{J}$ such that $A_n \subs^* A$, i.e. $A_n \setm A$ is finite, for all $n < \omega$.

\begin{lemma}\label{P}
Let $X,\,S$, and $\mathcal{U}$ be as above. Then $\mathcal{I}(S,\mathcal{U})$ is a $P$-ideal.
\end{lemma}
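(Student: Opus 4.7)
The plan is to produce, for any given countable family $\{A_n : n < \omega\} \subs \mathcal{I}(S, \mathcal{U})$, a pseudounion $A \in \mathcal{I}(S,\mathcal{U})$ by a simple diagonal thinning. The key observation is that $T = \bigcup_{n<\omega} A_n$ is a countable subset of $S$, so since the members of $\mathcal{U}$ are pairwise disjoint, only countably many of them can meet $T$. I enumerate these as a (possibly non-injective) sequence $\{U_k : k<\omega\}$; the remaining members of $\mathcal{U}$ will automatically be disjoint from anything I build inside $T$.

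Next, I set $B_n = A_n \setm \bigcup_{k<n} U_k$ and let $A = \bigcup_{n<\omega} B_n$, which is a countable subset of $S$. The two required properties are then routine to verify. First, the difference $A_n \setm A \subs A_n \setm B_n = \bigcup_{k<n}(A_n \cap U_k)$ is a finite union of finite sets, using $A_n \in \mathcal{I}(S, \mathcal{U})$, so $A_n \subs^* A$. Second, if $U \in \mathcal{U}$ does not appear among the $U_k$'s then $A \cap U \subs T \cap U = \empt$, while if $U = U_k$ then $B_n \cap U_k = \empt$ for every $n > k$ by construction, so $A \cap U_k \subs \bigcup_{n \le k}(A_n \cap U_k)$, which is again a finite union of finite sets.

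There is really no hard step here: the only conceptual input is the reduction to countably many relevant cells of $\mathcal{U}$, which is forced by the pairwise disjointness of $\mathcal{U}$ together with the countability of $T$. Once that is in place, the standard recipe $B_n = A_n \setm \bigcup_{k<n} U_k$ ensures that each $U_k$ is ``killed'' from stage $k+1$ onward, while only finitely many points are removed from each individual $A_n$.
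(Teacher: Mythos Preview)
Your proof is correct and is essentially identical to the paper's own argument: both enumerate the countably many members of $\mathcal{U}$ meeting $\bigcup_n A_n$, form $B_n = A_n \setm \bigcup_{k<n} U_k$, and verify the two required properties in the same way. Only the notation differs.
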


\begin{proof}
Let us consider any family $\{D_n : n < \omega\} \subs \mathcal{I}(S,\mathcal{U})$ and let
$\mathcal{V}$ be, the obviously countable, collection of all those elements of $\mathcal{U}$
that intersect $\cup \{D_n : n < \omega\}$. We may then write $\mathcal{V} = \{V_i : i < \omega\}$.
Then put $$D = \bigcup_{n < \omega} \big(D_n \setm \cup_{i< n} V_i \big).$$
We have $D \in \mathcal{I}(S,\mathcal{U})$ because $D \cap V_i \subs \cup_{n \le i}(D_n \cap V_i)$
is clearly finite for each $i < \omega$,
moreover $D \cap U = \emptyset$ whenever $U \in \mathcal{U} \setm \mathcal{V}$.
We also have that $D_n \setm D \subs \cup_{i < n}(D_n \cap V_i)$ is finite, hence $D_n \subs^* D$
for all $n < \omega$.

\end{proof}

\begin{lemma}\label{I}
Let $X,\,S$, and $\mathcal{U}$ be as above and assume, in addition, that $X$ is regular and
countably compact. Then the following are valid:

\smallskip

\begin{enumerate}[(i)]
\item $\overline{\cup \mathcal{U}} \setm \cup \{\overline{U} : U \in \mathcal{U}\} \subs \overline{\cup \{D' : D \in \mathcal{I}(S,\mathcal{U})\}}$;

\bigskip

\item if $x \in \overline{\cup \mathcal{U}} \setm \cup \{\overline{U} : U \in \mathcal{U}\}$ and
we also have $t(x,X) = \omega$ then even $x \in D'$
for some $D \in \mathcal{I}(S,\mathcal{U})$.

\end{enumerate}

\end{lemma}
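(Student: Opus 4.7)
The plan is to prove (i) by a direct construction that uses regularity and countable compactness, and then to derive (ii) from (i) by combining countable tightness at $x$ with the $P$-ideal property of $\mathcal{I}(S,\mathcal{U})$ that was established in Lemma~\ref{P}.

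For (i), I would fix $x \in \overline{\cup \mathcal{U}} \setm \cup\{\overline{U} : U \in \mathcal{U}\}$ and an arbitrary open neighborhood $V$ of $x$, and aim to produce some $D \in \mathcal{I}(S,\mathcal{U})$ with $V \cap D' \neq \empt$. By regularity I would first shrink $V$ to an open neighborhood $W$ of $x$ with $\overline{W} \subs V$. The key preliminary step is to verify that $W$ meets infinitely many members of $\mathcal{U}$: if only $U_1,\dots,U_k$ were to hit $W$, then the hypothesis $x \notin \overline{U_i}$ together with regularity would yield a smaller neighborhood of $x$ inside $W$ avoiding every $U_i$ and hence all of $\cup\mathcal{U}$, contradicting $x\in\overline{\cup\mathcal{U}}$. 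Once this is in hand I would pick distinct $U_n\in\mathcal{U}$ meeting $W$ and use density of $S$ to choose $s_n\in S\cap U_n\cap W$; the set $D=\{s_n:n<\omega\}$ then belongs to $\mathcal{I}(S,\mathcal{U})$ because $|D\cap U_n|=1$ and $D\cap U=\empt$ for all other $U\in\mathcal{U}$, and countable compactness provides a limit point $y$ of $D$, which necessarily lies in $\overline{D}\subs\overline{W}\subs V$.

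For (ii), part (i) already gives $x\in\overline{A}$ with $A=\bigcup\{D':D\in\mathcal{I}(S,\mathcal{U})\}$. Countable tightness at $x$ then supplies a countable $B=\{x_n:n<\omega\}\subs A$ with $x\in\overline{B}$, and for each $n$ I would pick a witness $D_n\in\mathcal{I}(S,\mathcal{U})$ with $x_n\in D_n'$. Applying Lemma~\ref{P} to $\{D_n:n<\omega\}$ yields a single $D\in\mathcal{I}(S,\mathcal{U})$ with $D_n\subs^* D$ for every $n$; since limit points are unaffected by the removal of finitely many points, this gives $D_n'\subs D'$, so $B\subs D'$. Because $X$ is $T_1$, the derived set $D'$ is closed, whence $x\in\overline{B}\subs \overline{D'}=D'$, as required. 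The interesting step, and the one doing the real work in (ii), is the compression of the countably many witnesses $D_n$ into a single $D$ via the $P$-ideal property; once this maneuver is performed, the rest is formal and I expect no further obstacles.
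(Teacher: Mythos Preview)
Your proposal is correct and follows essentially the same approach as the paper: for (i) you pick points of $S$ in infinitely many members of $\mathcal{U}$ inside a given neighborhood and invoke countable compactness, and for (ii) you combine countable tightness with the $P$-ideal property from Lemma~\ref{P}; the only cosmetic difference is that you apply regularity up front by shrinking $V$ to $W$, while the paper applies it at the end, and you spell out why the neighborhood meets infinitely many $U$'s and why $D'$ is closed, both of which the paper leaves implicit.
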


\begin{proof}
(i) Let $x \in \overline{\cup \mathcal{U}} \setm \cup \{\overline{U} : U \in \mathcal{U}\}$ and
take any open set $V$ containing $x$. Then $V$ clearly intersects infinitely many members of $\mathcal{U}$,
hence there are distinct $\{U_n : n < \omega\} \subs \mathcal{U}$ such that $V \cap U_n \ne \emptyset$
for all $n < \omega$. Then we may choose $x_n \in S \cap V \cap U_n$ for each $n$, and clearly
$D = \{x_n : n < \omega\} \in \mathcal{I}(S,\mathcal{U})$.
The countable compactness of $X$ then implies $\emptyset \ne D' \subs \overline{V}$
which shows that every closed neighbourhood of $x$ intersects $\cup \{D' : D \in \mathcal{I}(S,\mathcal{U})\}$.
Thus by the regularity of $X$ we have established (i).

\medskip

(ii) Now if $x \in \overline{\cup \mathcal{U}} \setm \cup \{\overline{U} : U \in \mathcal{U}\}$ and
$t(x,X) = \omega$ then by part (i) there is a countable subset
$\{x_n : n < \omega\} \subs \cup \{D' : D \in \mathcal{I}(S,\mathcal{U})\}$
with $x \in \overline{\{x_n : n < \omega\}}$, hence $x \in \{x_n : n < \omega\}'$.

By definition, for each $n < \omega$ there is
some $D_n \in \mathcal{I}(S,\mathcal{U})$ such that $x_n \in D'_n$ and by lemma \ref{P} there
is $D \in \mathcal{I}(S,\mathcal{U})$ for which $D_n \subs^* D$
for all $n < \omega$. But, obviously, $D_n \subs^* D$ implies $D'_n \subs D'$,
consequently we have $x \in D'$.

\end{proof}

We are now prepared to formulate and prove the following, perhaps surprising, result.

\begin{theorem}\label{main}
Assume that the space $X$ is regular and countably compact and $S \subs X$ is dense in $X$.
Then for every countable nowhere dense set $A \subs X$ such that $t(x,X) = \omega$ for all
$x \in A$ there is a countable discrete subset $D$ of $S$ for which $A \subs \overline{D}$.
\end{theorem}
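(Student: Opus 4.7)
The plan is to piece together Lemmas \ref{P} and \ref{I}. To invoke Lemma \ref{I}(ii) I first need to place the countable nowhere dense set $A$ inside the ``boundary'' $\overline{\cup \mathcal{U}} \setm \cup\{\overline{U} : U \in \mathcal{U}\}$ of a well-chosen disjoint family of open sets $\mathcal{U}$. Lemma \ref{I}(ii) will then furnish a witness $D_a \in \mathcal{I}(S,\mathcal{U})$ for each $a \in A$, and the $P$-ideal property from Lemma \ref{P} will amalgamate the countably many $D_a$'s into a single countable discrete set $D \subs S$ with $A \subs \overline{D}$.

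Concretely, I would fix, by Zorn's lemma together with the regularity of $X$, a maximal pairwise disjoint family $\mathcal{U}$ of non-empty open subsets of $X$ each of whose closures misses $A$. Since $A$ is nowhere dense, maximality forces $\cup \mathcal{U}$ to be dense in $X$, so $A \subs \overline{\cup \mathcal{U}}$; and by construction $A$ is disjoint from $\cup\{\overline{U} : U \in \mathcal{U}\}$. Thus every $a \in A$ lies in $\overline{\cup \mathcal{U}} \setm \cup\{\overline{U} : U \in \mathcal{U}\}$, which is exactly the hypothesis needed for Lemma \ref{I}(ii); the countable tightness of $X$ at $a$ then supplies, for each $a \in A$, a set $D_a \in \mathcal{I}(S,\mathcal{U})$ with $a \in D_a'$.

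Because $A$ is countable and $\mathcal{I}(S,\mathcal{U})$ is a $P$-ideal by Lemma \ref{P}, I can choose a single $D \in \mathcal{I}(S,\mathcal{U})$ with $D_a \setm D$ finite for every $a \in A$. Removing finitely many points does not destroy accumulation points, so $D_a' \subs D'$, whence $a \in D' \subs \overline{D}$ for each $a \in A$, giving $A \subs \overline{D}$.

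The one point genuinely needing care is that $D$ be discrete as a subspace of $X$: a member of $\mathcal{I}(S,\mathcal{U})$ is only well-behaved at points inside $\cup \mathcal{U}$, so I must check that $D \subs \cup \mathcal{U}$. Tracing the constructions, the sets $D_a$ produced by Lemma \ref{I}(ii) consist of points picked from $S \cap V \cap U$ with $U \in \mathcal{U}$, hence $D_a \subs \cup \mathcal{U}$; and the $P$-ideal witness built in Lemma \ref{P} is a subset of $\bigcup_a D_a$, so $D \subs \cup \mathcal{U}$ too. For any $x \in D$, the member $U \in \mathcal{U}$ containing $x$ meets $D$ in a finite set, so in a $T_1$ space one can refine $U$ to a neighbourhood of $x$ meeting $D$ only in $\{x\}$. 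This small bookkeeping around $\cup \mathcal{U}$ is the only subtlety I foresee; everything else is a direct assembly of the two preparatory lemmas.
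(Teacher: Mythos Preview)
Your proposal is correct and follows essentially the same route as the paper: choose a maximal disjoint family $\mathcal{U}$ of open sets whose closures miss $A$, invoke Lemma~\ref{I}(ii) for each $a\in A$, and amalgamate via Lemma~\ref{P}. Your extra care in checking that the resulting $D$ lies inside $\cup\mathcal{U}$ and is therefore discrete is well placed; the paper simply asserts beforehand that every member of $\mathcal{I}(S,\mathcal{U})$ is discrete, whereas you verify it explicitly for the particular $D$ produced.
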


\begin{proof}
Let $\mathcal{U}$ be a maximal disjoint collection of non-empty open sets $U$ in $X$
such that $A \cap \overline{U} = \empt$. Then the regularity of $X$ and the maximality of $\mathcal{U}$
imply that $\cup \mathcal{U}$ is dense in $X$,
hence we have  $A \subs \overline{\cup \mathcal{U}} \setm \cup \{\overline{U} : U \in \mathcal{U}\}$. We may then apply
part (ii) of lemma \ref{I} to conclude that for each $x \in A$ there is some $D_x \in \mathcal{I}(S,\mathcal{U})$
such that $x \in D'_x$. By lemma \ref{P}, however, there is a set $D \in \mathcal{I}(S,\mathcal{U})$ for
which $D_x \subs^* D$ whenever $x \in A$, and then we obviously have $A \subs D' \subs \overline{D}$.

\end{proof}

The following ZFC "non-separation" result is then an immediate corollary of theorem \ref{main},
by simply putting $S = X$.

\begin{corollary}\label{wD=wN}
Any countably tight regular space that is $\omega D$-bounded is also $\omega N$-bounded.
\end{corollary}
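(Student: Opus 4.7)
The plan is to derive this as essentially a one-line application of Theorem \ref{main}, with $S$ taken to be $X$ itself. First I would observe that every $\omega D$-bounded space is countably compact (this is noted in the introduction), so the hypotheses of Theorem \ref{main} are met once $X$ is regular.

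Next, I would fix an arbitrary $A \in [X]^{\omega N}$ and argue that $\overline{A}$ is compact. Since $X$ is countably tight, $t(x,X) = \omega$ holds for every $x \in X$, and in particular for every $x \in A$. Applying Theorem \ref{main} with $S = X$ then yields a countable discrete set $D \subs X$ such that $A \subs \overline{D}$. By $\omega D$-boundedness, $\overline{D}$ is compact, and since $\overline{A}$ is a closed subspace of $\overline{D}$, it too is compact.

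There is really no obstacle here beyond lining up the hypotheses correctly: the content of the corollary is carried entirely by Theorem \ref{main}, and the role of countable tightness is only to supply the pointwise tightness condition needed to invoke part (ii) of Lemma \ref{I} inside that proof. The only thing worth double-checking is that taking $S = X$ is legitimate in Theorem \ref{main} — it is, since $X$ is trivially dense in itself — and that we may cover the case $A = \emptyset$, which is vacuous.
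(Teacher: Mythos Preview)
Your proposal is correct and matches the paper's own argument exactly: the corollary is derived as an immediate consequence of Theorem~\ref{main} by taking $S = X$. The additional observations you make (that $\omega D$-boundedness gives countable compactness, and that $\overline{A}$ is a closed subset of the compact $\overline{D}$) are precisely the implicit steps the paper leaves to the reader.
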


In \cite{DTTW} it was proved that countably tight compacta are discretely determined, that
is if a point is in the closure of a set then it is in the closure of its discrete subset.
Our next corollary of theorem \ref{main} extends this result from compacta to regular and
countably compact spaces.

\begin{corollary}\label{DD}
All countably tight, regular, and countably compact spaces are discretely determined.
\end{corollary}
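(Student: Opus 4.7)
The plan is to reduce the statement to a direct application of Theorem~\ref{main} with a singleton playing the role of the countable nowhere dense set. Fix a regular, countably tight, countably compact space $X$, a subset $A \subs X$, and a point $x \in \overline{A}$; the goal is to produce a discrete $D \subs A$ with $x \in \overline{D}$.

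First, I would invoke countable tightness to pick a countable $A_0 \subs A$ with $x \in \overline{A_0}$. If $x \in A_0$ then $D = \{x\}$ already works, so I would concentrate on the case $x \notin A_0$, in which $x$ is a genuine accumulation point of $A_0$.

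Next, pass to the closed subspace $Y = \overline{A_0}$ of $X$. Being closed in a regular, countably compact, countably tight space, $Y$ inherits all three of these properties, and $A_0$ is a countable dense subset of $Y$. Moreover, since $x$ is a non-isolated point of the $T_1$ space $Y$, the singleton $\{x\}$ is nowhere dense in $Y$, and $t(x,Y) = \omega$ follows trivially from $t(x,X) = \omega$. Thus, applying Theorem~\ref{main} inside $Y$ to the dense set $S = A_0$ and the countable nowhere dense set $\{x\}$ produces a countable discrete $D \subs A_0 \subs A$ with $x \in \overline{\{x\}} \subs \overline{D}$, as required.

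The main --- indeed, essentially the only --- obstacle is conceptual: finding the right reduction. Once one realizes that a single non-isolated point is already a perfectly good countable nowhere dense set, Theorem~\ref{main} does all the genuine work; everything else is a routine verification that the hypotheses transfer to the closed subspace $\overline{A_0}$.
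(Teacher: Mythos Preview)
Your proof is correct and follows essentially the same route as the paper: both arguments observe that a non-isolated singleton $\{x\}$ is nowhere dense and then apply Theorem~\ref{main} with a suitable dense set to obtain the required discrete $D$. The paper compresses the passage to $Y=\overline{A_0}$ into a single ``clearly,'' but your explicit verification that regularity, countable compactness, and countable tightness descend to this closed subspace is exactly the reduction the paper has in mind.
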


\begin{proof}
Clearly, this follows if we can show that for any countably tight, regular, and countably
compact space $X$, for any dense subset $S \subs X$, and for any non-isolated point $x \in X$
we have $x \in \cl^{\omega D}(S)$.
But for a non-isolated point $x \in X$ the singleton set $\{x\}$ is nowhere dense,
hence theorem \ref{main} can be applied to it and the dense set $S$.

\end{proof}

We can now move on to our promised result that for countably tight and regular spaces the
$\omega D$-bounded property implies $\omega$-boundedness, provided that $\mathfrak{b} > \omega_1$
holds.

To this end, we recall the well-known fact (see e.g. \cite{JS}, 1.7) that for the property ``$F \equiv$ free sequence",
being $F$-bounded is equivalent with compactness, that is if all free sequences in a space $X$ have compact
closures then $X$ is compact. This fact will be used in the proof of our result.

\begin{theorem}\label{b>w_1}
Assume that $\mathfrak{b} > \omega_1$ holds. Then every countably tight and regular space that is
$\omega D$-bounded is also $\omega$-bounded.
\end{theorem}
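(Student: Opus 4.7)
The plan is to argue by contradiction. Assume $X$ is countably tight, regular, $\omega D$-bounded, with $\mathfrak{b}>\omega_1$, yet $X$ is not $\omega$-bounded; pick a countable $A\subs X$ whose closure $Y=\overline{A}$ is not compact. Then $Y$ inherits from $X$ the properties of being regular, countably compact, countably tight, and $\omega D$-bounded (a discrete subset of $Y$ is discrete in $X$ and has its $X$-closure contained in the closed set $Y$).

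Every countable free sequence in $Y$ is discrete, hence has compact closure by $\omega D$-boundedness. If $Y$ carried no free sequence of length $\omega_1$, then every free sequence in $Y$ would be countable and therefore have compact closure, making $Y$ an $F$-bounded space, and thus compact by the cited fact, contradicting the choice of $A$. Hence a free sequence $\{y_\alpha:\alpha<\omega_1\}$ exists in $Y$; by discarding countably many indices we may assume $y_\alpha\in Y\setm A$ for every $\alpha$, so each $y_\alpha$ is a non-isolated limit of the countable dense set $A$. For every $\alpha$ the singleton $\{y_\alpha\}$ is a countable nowhere dense set of countably tight points in $Y$, so Theorem~\ref{main} (applied inside $Y$ with the dense set $A$) yields a countable discrete $D_\alpha\subs A$ with $y_\alpha\in\overline{D_\alpha}$; of course $D_\alpha$ is also discrete in $X$.

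The crucial step is to use $\mathfrak{b}>\omega_1$ to amalgamate the $\omega_1$ sets $D_\alpha$ into a single countable discrete $D\subs A$ with $y_\alpha\in\overline{D}$ for every $\alpha$. Fix a countable disjoint family $\mathcal{U}=\{U_n:n<\omega\}$ of open subsets of $Y$, chosen so that every $D_\alpha$ lies in the $P$-ideal $\mathcal{I}(A,\mathcal{U})$ of Lemma~\ref{P}; enumerate $A=\{a_k:k<\omega\}$ and set $f_\alpha(n)=\max\{k:a_k\in D_\alpha\cap U_n\}$, with the convention $\max\empt=0$. Each $f_\alpha$ lies in $\omega^\omega$ because $D_\alpha\cap U_n$ is finite, and the hypothesis $\mathfrak{b}>\omega_1$ produces a single $g\in\omega^\omega$ with $f_\alpha\le^* g$ for every $\alpha<\omega_1$. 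Following the recipe in the proof of Lemma~\ref{P}, the set $D\in\mathcal{I}(A,\mathcal{U})$ assembled from $g$ is countable and discrete in $X$ and satisfies $D_\alpha\subs^* D$ for every $\alpha$; since $y_\alpha\notin A$, the finite difference $D_\alpha\setm D$ does not affect accumulation, hence $y_\alpha\in\overline{D_\alpha}\subs\overline{D}$ for each $\alpha<\omega_1$.

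Finally, $\overline{D}$ is compact by $\omega D$-boundedness and, as a subspace of $X$, is countably tight; Arhangel'ski\u{\i}'s theorem on the free-sequence cardinal then gives $F(\overline{D})=t(\overline{D})=\omega$, so $\overline{D}$ admits no free sequence of length $\omega_1$. But $\{y_\alpha:\alpha<\omega_1\}\subs\overline{D}$ remains a free sequence in the subspace $\overline{D}$ (freeness is preserved under passage to subspaces), which is the desired contradiction. The principal obstacle is the third step: one has to choose a single disjoint family $\mathcal{U}$ working simultaneously for all the $D_\alpha$, and then to verify that $\mathfrak{b}>\omega_1$ really lifts the $\sigma$-directedness modulo finite of the $P$-ideal $\mathcal{I}(A,\mathcal{U})$ furnished by Lemma~\ref{P} to $\omega_1$-directedness.
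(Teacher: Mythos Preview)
Your overall strategy matches the paper's: pass to the separable closed subspace $Y=\overline{A}$, find a free sequence $\{y_\alpha:\alpha<\omega_1\}$ there, then trap each $y_\alpha$ in the closure of a discrete $D_\alpha\subs A$ and use $\mathfrak b>\omega_1$ to merge the $D_\alpha$'s into a single discrete $D$ whose compact closure swallows the free sequence. The final contradiction (free sequence inside a countably tight compactum) is fine; the paper uses the even simpler observation that $\overline{F}\subs\overline{D}$ would force $\overline{F}$ to be compact.

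The genuine gap is exactly the point you flag yourself: you produce the $D_\alpha$'s first, as a black-box application of Theorem~\ref{main}, and only afterwards ask for a single disjoint family $\mathcal U$ with $D_\alpha\in\mathcal I(A,\mathcal U)$ for \emph{every} $\alpha<\omega_1$. Nothing in Theorem~\ref{main} controls which cellular family its output sits in---its proof builds a \emph{different} maximal disjoint family for each nowhere dense set---so there is no reason $\omega_1$ arbitrary discrete subsets of $A$ should all have finite trace on a common $\mathcal U$. Without that, the coding by functions $f_\alpha\in\omega^\omega$ and the appeal to $\mathfrak b>\omega_1$ never get off the ground.

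The paper avoids this by reversing the order of the two choices. One first reduces to the crowded case (so that the free sequence $F$, being discrete, is nowhere dense), then fixes \emph{one} maximal disjoint family $\mathcal U$ of opens whose closures miss \emph{all} of $F$. Because $\cup\mathcal U$ is dense, each $y_\alpha$ lies in $\overline{\cup\mathcal U}\setm\bigcup\{\overline U:U\in\mathcal U\}$, and now Lemma~\ref{I}(ii), applied with this fixed $\mathcal U$, hands you $D_\alpha\in\mathcal I(S,\mathcal U)$ with $y_\alpha\in D_\alpha'$ for every $\alpha$. All the $D_\alpha$'s live in the same $P$-ideal by construction, so the $\mathfrak b>\omega_1$ domination argument goes through. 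In short: do not invoke Theorem~\ref{main} as a finished package; open it up and choose $\mathcal U$ once, governed by the whole free sequence rather than by single points.
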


\begin{proof}
What we shall show is that every $\omega D$-bounded, countably tight, and regular {\em separable} space $X$
is compact. To do this, we first also assume that $X$ is crowded. We now fix a countable dense set $S \subs X$.

Let us assume, arguing indirectly, that $X$ is not compact.
According to the remark we made above, then there is a free sequence in $X$ that has non-compact closure.
Since free sequences are discrete and $X$ is $\omega D$-bounded, this free sequence cannot be countable, hence
there is a free sequence $F = \{x_\alpha : \alpha < \omega_1\}$ of type $\omega_1$ in $X$. The
countable tightness of $X$ then implies that $F$ cannot have a complete accumulation point in $X$,
hence $\overline{F}$ is not compact.

Now let us fix a maximal disjoint collection $\mathcal{U}$ of non-empty open sets $U$ in $X$
such that $F \cap \overline{U} = \empt$. Then $\cup \mathcal{U}$ is dense in $X$ because $F$,
being discrete, is nowhere dense in our crowded and regular space $X$. Then $|\mathcal{U}| = \omega$,
for $X$ is separable, and it cannot be finite as $\cup \{\overline{U} : U \in \mathcal{U}\} \ne X$.
Also, $|U \cap S| = \omega$ for each $U \in \mathcal{U}$ because $X$ is crowded.

Thus we may write $\mathcal{U} = \{U_n : n < \omega\}$ and $S \cap U_n = \{s_{n,i} : i <\omega\}$ for each $n$.
Note that for every set $D \in  \mathcal{I}(S,\mathcal{U})$ we can fix a function $f_D \in \omega^\omega$
such that $D \cap U_n \subs \{s_{n,i} : i < f_D(n)\})$ for every $n < \omega$.

It follows from part (ii) of lemma \ref{I} that for every $\alpha < \omega_1$ there is some
$D_\alpha \in  \mathcal{I}(S,\mathcal{U})$ with $x_\alpha \in D'_\alpha$. Now we apply $\mathfrak{b} > \omega_1$
by taking a function $f \in \omega^\omega$ such that $f_{D_\alpha} <^* f$ for all $\alpha < \omega_1$.
Let us then put $$D = \bigcup_{n < \omega} \{ s_{n,i} : i < f(n)\}.$$
Clearly, we have $D \in  \mathcal{I}(S,\mathcal{U})$ and $D_\alpha \subs^* D$ for each $\alpha < \omega_1$.
But this would imply $x_\alpha \in D'_\alpha \subs D' \subs \overline{D}$ for all $\alpha < \omega_1$,
hence $F \subs \overline{D}{}$ and this
is a contradiction because $\overline{D}$ is compact but $\overline{F}$ is not.

Now consider the general case in which $X$ is not necessarily crowded, hence $I(X)$, the set of the
isolated points of $X$, may be non-empty. Of course, $I(X)$ is countable as $X$ is separable.
Thus, if $I(X)$ is dense in $X$ then $X = \overline{I(X)}$ is compact. Otherwise, $G = X \setm \overline{I(X)}$
is an open, hence separable and crowded subspace, and so is its closure $\overline{G}$ which is $\omega D$-bounded.
Thus $\overline{G}$ is compact by the above and hence so is $X = \overline{I(X)} \cup \overline{G}$.

\end{proof}

Example \ref{mex} form the previous section is, under $\mathfrak{p} = \cof(\mathcal{M}) = \omega_1$,
in particular under CH, a first countable $\omega D$-bounded (and hence by \ref{wD=wN}
necessarily $\omega N$-bounded) regular -- even locally compact -- separable space that is
not compact. This shows that the assumption $\mathfrak{b} > \omega_1$ cannot simply be
omitted from theorem \ref{b>w_1}. However the following problem remains open.

\begin{problem}
Does $\mathfrak{b} = \omega_1$ imply the existence of a countably tight (or Fr\` echet, or first countable)
regular space that is $\omega D$-bounded but not $\omega$-bounded?
\end{problem}

\section{On products}

It is an immediate consequence of Tychonov's theorem that $\omega$-boundedness is a fully productive
property, while it is also well-known that countable compactness is not productive at all. So, it is
natural to raise the question: how productive are the properties in between these two that we have
considered in this paper. As we shall see, the $\omega D$-bounded property yields an interesting dividing
line for this question. We start with a very simple but useful observation.

\begin{proposition}\label{times}
If the space $X$ is not $\omega$-bounded and the space $Y$ has an infinite discrete subspace
then their product $X \times Y$ is not $\omega D$-bounded.
\end{proposition}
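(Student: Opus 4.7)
The plan is to construct a countable discrete subset $D$ of $X\times Y$ whose closure fails to be compact; the natural candidate is the ``diagonal'' combining a witness in $X$ for non-$\omega$-boundedness with the discrete sequence in $Y$. Concretely, I would fix a countable set $A=\{a_n:n<\omega\}\subseteq X$ with $\overline{A}$ not compact; since in a $T_1$ space every finite set is closed (hence compact), $A$ must be infinite and we may take the $a_n$ pairwise distinct. I would also fix an infinite discrete subspace $\{y_n:n<\omega\}\subseteq Y$ together with witnessing open sets $V_n\subseteq Y$ such that $V_n\cap\{y_m:m<\omega\}=\{y_n\}$. Set
$$D=\{(a_n,y_n):n<\omega\}\subseteq X\times Y.$$

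The discreteness of $D$ is immediate: the open strip $X\times V_n$ satisfies $D\cap (X\times V_n)=\{(a_n,y_n)\}$, so each point of $D$ is isolated in $D$.

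For the non-compactness of $\overline{D}$ I would argue by contradiction via the projection $\pi_X\colon X\times Y\to X$. Continuity gives $\pi_X(\overline{D})\subseteq \overline{\pi_X(D)}=\overline{A}$, while trivially $A=\pi_X(D)\subseteq \pi_X(\overline{D})$. If $\overline{D}$ were compact, then $\pi_X(\overline{D})$ would be a compact subset of $\overline{A}$ that is dense in $\overline{A}$ (since it contains $A$); in a Hausdorff (indeed, even regular) ambient space this compact set is closed, so it coincides with $\overline{A}$, forcing $\overline{A}$ to be compact and contradicting the choice of $A$.

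The only place to be careful is the last step, where compactness of $\pi_X(\overline{D})$ is transferred to $\overline{A}$; this is the one line that implicitly uses that compact sets in the spaces under consideration are closed. Apart from this standard point (which is harmless for the spaces the paper actually studies), everything is entirely formal, which is why the authors flag the proposition as a ``very simple but useful observation''.
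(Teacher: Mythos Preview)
Your argument is correct and essentially identical to the paper's: the same diagonal set $D=\{(a_n,y_n):n<\omega\}$, the same discreteness witness via the $Y$-neighbourhoods, and the same projection argument for non-compactness of $\overline{D}$. Your explicit remark that the last step uses closedness of compact sets (hence tacitly Hausdorffness) is a fair observation that the paper leaves implicit; in the paper's only application of the proposition (Theorem~\ref{prod-wD}) the factors are assumed Hausdorff, so no harm is done.
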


\begin{proof}
Let $A = \{x_n : n < \omega\} \subs X$ be such that $\overline{A}$ is not compact and
$B = \{y_n : n < \omega\} \subs Y$ be infinite and discrete.
Then $$D = \{\langle x_n,y_n \rangle : n < \omega \} \subs X \times Y$$
is discrete (and countable) but cannot have compact closure because $A$ does not.

\end{proof}

Clearly, if $X$ is an $\omega D$-bounded but not $\omega$-bounded space then so is its product
with any finite (discrete) space. The following, perhaps surprising, result says that
an $\omega D$-bounded product  of Hausdorff spaces can fail to be $\omega$-bounded only
in this trivial way.

\begin{theorem}\label{prod-wD}
Assume that the Hausdorff product space $X = \Pi \{X_i : i \in I\}$ is $\omega D$-bounded
but not $\omega$-bounded. Then there is $i \in I$ such that $\Pi \{X_j : j \in I\setm\{i\}\}$ is finite.
\end{theorem}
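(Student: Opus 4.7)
My plan combines two ingredients: the productivity of $\omega$-boundedness for Hausdorff products, and Proposition \ref{times} applied in contrapositive form.

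\smallskip

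\textbf{First step: locate a non-$\omega$-bounded factor.} I observe that $\omega$-boundedness is fully productive in the Hausdorff category. Indeed, if each $X_j$ is $\omega$-bounded and $A \subs \prod_{j \in I} X_j$ is countable, then each projection $\pi_j(A)$ is countable, hence has compact closure $K_j \subs X_j$; therefore $\overline{A} \subs \prod_j K_j$, which is compact by Tychonoff, so $\overline{A}$ is compact. Since $X$ is assumed not $\omega$-bounded, at least one factor must fail to be $\omega$-bounded; fix an index $i \in I$ such that $X_i$ is not $\omega$-bounded. (Note each $X_j$ is Hausdorff since it embeds into $X$ by fixing the other coordinates.)

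\smallskip

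\textbf{Second step: force the complementary product to be finite.} Write $Y_i = \prod\{X_j : j \in I \setm \{i\}\}$, so that $X$ is canonically homeomorphic to $X_i \times Y_i$. I claim $Y_i$ is finite. Suppose, toward a contradiction, that $Y_i$ is infinite. As a product of Hausdorff factors, $Y_i$ is Hausdorff. I now invoke the standard fact that every infinite Hausdorff space contains an infinite discrete subspace, which supplies an infinite discrete $D \subs Y_i$. Since $X_i$ is not $\omega$-bounded and $Y_i$ has the infinite discrete subspace $D$, Proposition \ref{times} yields that $X_i \times Y_i \cong X$ is not $\omega D$-bounded, contradicting the hypothesis. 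Hence $Y_i$ is finite, as required.

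\smallskip

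\textbf{Main obstacle.} The only nontrivial ingredient is the folklore fact that every infinite Hausdorff space admits an infinite discrete subspace. If $Y_i$ has an infinite subset without a cluster point in $Y_i$, that subset is already discrete after shrinking neighborhoods using Hausdorffness; otherwise $Y_i$ is countably compact, and one needs a slightly more careful inductive construction of a sequence $\{y_n : n < \omega\}$ together with pairwise separating open witnesses, repeatedly using Hausdorffness to keep each chosen point from clustering at any previously chosen one. All other steps — the productivity of $\omega$-boundedness and the invocation of Proposition \ref{times} — are essentially routine.
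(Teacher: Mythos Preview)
Your proof is correct and follows essentially the same route as the paper's: locate a non-$\omega$-bounded factor $X_i$ via the productivity of $\omega$-boundedness, then apply Proposition \ref{times} to conclude that the complementary product $Y_i$ cannot be infinite (using that infinite Hausdorff spaces contain infinite discrete subspaces). The paper's argument is identical in structure, just more terse; your added justifications (why each factor is Hausdorff, why $\omega$-boundedness is productive) are fine but not strictly needed here.
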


\begin{proof}
There is $i \in I$ such that $X_i$ is not $\omega$-bounded because the full product
$X$ is not. But then the rest of the product, $Y = \Pi \{X_j : j \in I \setm \{i\}\}$,
cannot be infinite because every infinite Hausdorff space has an infinite discrete subspace,
and then, by proposition \ref{times}, $\,X \cong X_i \times Y$ would not be $\omega D$-bounded,
contradicting our assumption.

\end{proof}

Theorem \ref{prod-wD} says that it is hard for a product of Hausdorff spaces to be $\omega D$-bounded,
except in the trivial case when the factors and the product are $\omega$-bounded. In particular, if $X$ is
Hausdorff and not
$\omega$-bounded then the power $X^\kappa$ is never $\omega D$-bounded if $\kappa > 1$.
Our next result, on the other hand, says that
multiplying by an $\omega D$-bounded space always preserves countable compactness. Thus, when we take a non-trivial
product of finitely many regular $\omega D$-bounded spaces then $\omega D$-boundedness is lost but countable compactness
is preserved.

Actually, this result holds for a natural boundedness property that, at least for Hausdorff spaces, is weaker than
$\omega D$-boundedness and is also implied by sequential compactness.

\begin{definition}
We say that the space $X$ is {\em weakly bounded} iff every infinite subset of $X$ has
an infinite subset of compact closure in $X$.
\end{definition}

\begin{theorem}\label{prod-CC}
Assume that $X$ is a weakly bounded and $Y$ is a countably compact space. Then the product
$X \times Y$ is countably compact as well.
\end{theorem}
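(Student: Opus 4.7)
To prove countable compactness of $X\times Y$, I would take an arbitrary countably infinite $S=\{(x_n,y_n):n<\omega\}\subseteq X\times Y$ and search for an accumulation point. First dispose of the degenerate cases: if $\{x_n:n<\omega\}$ is finite, some value $x$ is attained infinitely often and the countable compactness of $Y$ gives an accumulation point $y$ of the corresponding $y_n$'s, making $(x,y)$ accumulate $S$; if $\{y_n:n<\omega\}$ is finite, the analogous argument works since weak boundedness of $X$ yields an infinite subset of the relevant $x_n$'s with compact closure, and compact $T_1$ spaces are countably compact. So I may assume both coordinate projections of $S$ are infinite, and after thinning I may take the $x_n$ (and hence the pairs $(x_n,y_n)$) to be pairwise distinct.

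Now I would invoke weak boundedness of $X$ on $\{x_n:n<\omega\}$ to obtain an infinite subset whose closure $K\subseteq X$ is compact, and reindex so that $\{x_n:n<\omega\}\subseteq K$. Since $Y$ is countably compact and $T_1$, the infinite set $\{y_n:n<\omega\}$ has an $\omega$-accumulation point $y\in Y$, meaning that $N_V:=\{n<\omega:y_n\in V\}$ is infinite for every open neighbourhood $V$ of $y$. The family $\{N_V:V\text{ open},\,y\in V\}$ is a filter base of infinite subsets of $\omega$, which I extend to a free ultrafilter $\mathcal{F}$ on $\omega$.

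The compactness of $K$ delivers a point $x=\mathcal{F}\text{-}\lim_n x_n\in K$, i.e.\ $\{n<\omega:x_n\in U\}\in\mathcal{F}$ for every open $U\ni x$. I claim $(x,y)$ is an $\omega$-accumulation point of $S$. Indeed, given any basic open neighbourhood $U\times V$ of $(x,y)$, both $\{n:x_n\in U\}$ and $N_V$ belong to $\mathcal{F}$, so their intersection $\{n<\omega:(x_n,y_n)\in U\times V\}$ is in $\mathcal{F}$ and is therefore infinite. Since the pairs $(x_n,y_n)$ are distinct, this makes $(x,y)$ an accumulation point of $S$ in $X\times Y$.

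The main obstacle is synchronising the two coordinates: weak boundedness of $X$ only gives a compact closure for \emph{some} infinite subset of $\{x_n\}$, and countable compactness of $Y$ only clusters the full sequence $\{y_n\}$, so one cannot naively thin the two coordinates independently and still control the result. The ultrafilter $\mathcal{F}$ is the natural device that aligns them: by forcing every $N_V$ into $\mathcal{F}$ we fix the $y$-limit, and then the compactness of $K$ provides a matching $x$-limit along \emph{the same} collection of indices, so that the two limits assemble into a single accumulation point of the product sequence.
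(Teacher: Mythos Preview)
Your argument is correct. There is one small wrinkle: after you thin so that the $x_n$ are distinct and then pass to the subsequence with $x_n\in K$, the set $\{y_n:n<\omega\}$ need no longer be infinite, so invoking an ``$\omega$-accumulation point of the infinite set $\{y_n\}$'' is not quite right. This is harmless, since what you actually need is a cluster point of the \emph{sequence} $(y_n)$, i.e.\ a $y$ with each $N_V$ infinite; if the range is finite, take a value repeated infinitely often, and otherwise use countable compactness as you do.

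The paper's proof is shorter and takes a different route: once an infinite $A\subset\pi_X(H)$ with compact closure $\overline{A}$ is found, it simply observes that $\overline{A}\times Y$ is countably compact (compact $\times$ countably compact is countably compact) and that $H\cap(A\times Y)$ is infinite, so an accumulation point is immediate. In effect the paper black-boxes the synchronisation problem by quoting the classical product result, whereas you unpack that result via the ultrafilter $\mathcal{F}$. Your approach is more self-contained and makes the mechanism of synchronising the two coordinates explicit; the paper's approach is slicker but relies on a fact whose standard proof is essentially your ultrafilter argument.
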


\begin{proof}
Let $H \subs X \times Y$ be countably infinite. If the projection $\pi_X(H)$
of $H$ to $X$ is finite then there is a point $x \in \pi_X(H)$ such that
$Z = \{y \in Y : \langle x,y \rangle \in H \}$ is infinite. But then $Z$ has an
accumulation point $z$ in $Y$, and clearly then $\langle x,z \rangle$ is an
accumulation point of $H$ in $X \times Y$.

If, on the other hand, $\pi_X(H)$ is infinite then it has an infinite subset $A$
with compact closure. So then $\overline{A} \times Y$ is countably compact,
moreover $H \cap (A \times Y)$ is infinite and hence has an accumulation point in it.

\end{proof}

Of course, it is an immediate consequence of theorem \ref{prod-CC} that any product of finitely many
weakly bounded spaces is countably compact. Actually, we can prove much more than that.

\begin{theorem}\label{prod-wb}
The product of fewer than $\mathfrak{t}$ weakly bounded spaces is weakly bounded.
\end{theorem}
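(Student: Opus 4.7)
The plan is to reduce the weak boundedness of the product to Tychonoff plus a diagonal argument that chooses a single infinite subset of $H$ whose projections all have compact closures. Given spaces $\{X_\alpha : \alpha < \kappa\}$ with $\kappa < \mathfrak{t}$, each weakly bounded, and an infinite $H \subseteq X = \prod_\alpha X_\alpha$, I first pass to a countable $H_0 \subseteq H$ enumerated as $H_0 = \{h_n : n < \omega\}$. The key observation is that if $A \in [\omega]^\omega$ has the property that for every $\alpha < \kappa$ the closure $\overline{\pi_\alpha\{h_n : n \in A\}}$ is compact in $X_\alpha$, then $\prod_\alpha \overline{\pi_\alpha\{h_n : n \in A\}}$ is compact by Tychonoff and contains $\{h_n : n \in A\}$, so its closure in $X$ is a closed subset of a compactum, hence compact. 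Thus it suffices to produce such an $A$.

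To build $A$, I would recursively construct a $\subseteq^*$-decreasing sequence $\{A_\alpha : \alpha < \kappa\} \subseteq [\omega]^\omega$ with the following property at each stage $\alpha$: the closure of $\pi_\alpha\{h_n : n \in A_\alpha\}$ is compact in $X_\alpha$. Set $A_{-1} = \omega$. Having $\{A_\beta : \beta < \alpha\}$, since $\alpha < \kappa \le \mathfrak{t}$, the definition of the tower number gives an infinite pseudointersection $B_\alpha$, i.e.\ $B_\alpha \subseteq^* A_\beta$ for all $\beta < \alpha$. Now I consider $\pi_\alpha\{h_n : n \in B_\alpha\}$: if this set is finite, I choose an infinite $A_\alpha \subseteq B_\alpha$ on which $\pi_\alpha$ is constant, so the closure of the projection is a singleton; if it is infinite, weak boundedness of $X_\alpha$ yields an infinite $C \subseteq \pi_\alpha\{h_n : n \in B_\alpha\}$ with $\overline{C}$ compact, and I pick an infinite $A_\alpha \subseteq B_\alpha$ whose image under $\pi_\alpha$ lies in $C$. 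In either case, $\overline{\pi_\alpha\{h_n : n \in A_\alpha\}}$ is compact.

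Finally, applying the tower property one more time to the $\subseteq^*$-decreasing sequence $\{A_\alpha : \alpha < \kappa\}$, I obtain an infinite pseudointersection $A \in [\omega]^\omega$. For each fixed $\alpha < \kappa$, the symmetric difference $A \triangle (A \cap A_\alpha)$ is finite, so $\pi_\alpha\{h_n : n \in A\}$ agrees with a subset of $\pi_\alpha\{h_n : n \in A_\alpha\}$ modulo a finite set; hence its closure is the union of a finite set with a closed subset of the compact $\overline{\pi_\alpha\{h_n : n \in A_\alpha\}}$, and is therefore compact. By the reduction in the first paragraph, $\overline{\{h_n : n \in A\}}$ is compact in $X$, establishing weak boundedness.

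The main obstacle is purely bookkeeping: at each stage I must simultaneously shrink to keep the tower property modulo finite differences and to install the compact-projection property on the new coordinate, so the order of operations (first pseudointersect, then apply weak boundedness at coordinate $\alpha$) matters. There is no topological subtlety beyond Tychonoff and the elementary fact that closed subsets of compact sets are compact; the full strength of $\kappa < \mathfrak{t}$ is used essentially, once at each successor/limit stage and once at the very end to extract $A$.
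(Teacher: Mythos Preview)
Your proof is correct and follows essentially the same approach as the paper's: build a $\subseteq^*$-decreasing tower of length $\kappa$ by alternately taking pseudointersections and shrinking via weak boundedness at the current coordinate, then take a final pseudointersection and apply Tychonoff. The only differences are cosmetic (you index by subsets of $\omega$ rather than subsets of the countable set itself, and you handle the finite-projection case by passing to a constant fiber rather than keeping the whole set).
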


\begin{proof}
Assume that $\kappa < \mathfrak{t}$ and $\{X_\alpha : \alpha < \kappa\}$ are weakly bounded spaces
and let $A$ be any countably infinite subset of their product $\Pi \{X_\alpha : \alpha < \kappa\}$.
We now define by transfinite recursion sets $A_\alpha \in [A]^\omega$ for  $\alpha < \kappa$
that form a mod finite decreasing sequence, as follows.

Choose first $A_0 \in [A]^\omega$ such that the closure of the projection $\pi_0(A)$ in $X_0$ is compact.
This is possible because either $\pi_0(A)$ is finite, and then $A = A_0$ works, or it's infinite and then
we may use that $X_0$ is weakly bounded. If $0 < \alpha < \kappa$ and the mod finite decreasing sequence
$A_\beta : \beta < \alpha$ has been defined then we first choose a pseudo intersection $B_\alpha$ for it
and then, similarly as in step $0$, we pick $A_\alpha \in [B_\alpha]^\omega$
such that the closure of its projection $\pi_\alpha(A_\alpha)$ in $X_\alpha$ is compact.

After we are done, by $\kappa < \mathfrak{t}$ we can find a pseudo intersection $B \in [A]^\omega$ for the sequence
$\{A_\alpha : \alpha < \kappa\}$. Then for each $\alpha < \kappa$ we have that $B \setm A_\alpha$ is finite,
hence it is clear that $\pi_\alpha(B)$ has compact closure in $X_\alpha$. But this, by Tychonov's theorem, implies that
$B$ has compact closure in the product $\Pi \{X_\alpha : \alpha < \kappa\}$.

\end{proof}

We do not know if theorem \ref{prod-wb} is sharp, hence the following question can be naturally raised.

\begin{problem}
Is the product of $\,\mathfrak{t}$ weakly bounded spaces weakly bounded?
\end{problem}

While this question remains open, we could prove that such a product is at least countably compact.
This result improves theorem 1.6 of \cite{NyV} which says that the product of $\mathfrak{t}$
sequentially compact spaces is countably compact. Moreover, it implies that the
product of $\,\mathfrak{t}$ many $\omega D$-bounded Hausdorff spaces is countably compact.

\begin{theorem}\label{prod-t}
Any product of $\,\mathfrak{t}$ weakly bounded spaces is countably compact.
\end{theorem}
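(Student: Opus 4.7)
Fix a countably infinite $H=\{h_n:n<\omega\}\subs X=\Pi\{X_\alpha:\alpha<\mathfrak t\}$ with the $h_n$ pairwise distinct. Since Theorem \ref{prod-wb} already handles the case of fewer than $\mathfrak t$ factors, and the obstruction at length $\mathfrak t$ is precisely that a tower of that length need not admit a pseudo-intersection, the plan is to combine the tower construction from there with an ultrafilter-limit argument: build a mod finite decreasing tower $\<A_\alpha:\alpha<\mathfrak t\>$ of infinite subsets of $\omega$ whose projections have compact closures, extend it to a non-principal ultrafilter $\mathcal U$ on $\omega$, and use coordinatewise $\mathcal U$-limits of $(h_n)_{n<\omega}$ to produce an accumulation point of $H$ in $X$.

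For the tower: by recursion on $\alpha<\mathfrak t$ choose $A_\alpha\in[\omega]^\omega$ so that $\pi_\alpha\big(\{h_n:n\in A_\alpha\}\big)$ has compact closure $K_\alpha$ in $X_\alpha$. At stage $\alpha$, put $B:=A_\gamma$ when $\alpha=\gamma+1$, and let $B$ be a pseudo-intersection of $\{A_\beta:\beta<\alpha\}$ (which exists by the definition of $\mathfrak t$) at a limit $\alpha$. If $\pi_\alpha\big(\{h_n:n\in B\}\big)$ is finite, let $A_\alpha\subs B$ be an infinite $\pi_\alpha$-fiber over some value of that projection; otherwise, apply weak boundedness of $X_\alpha$ to this projection to get an infinite subset with compact closure, and pull it back to an infinite $A_\alpha\subs B$. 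In either case $A_\alpha\subs^* A_\beta$ for all $\beta<\alpha$, so the tower property is preserved.

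The family $\{A_\alpha:\alpha<\mathfrak t\}$ together with the cofinite filter on $\omega$ has the strong finite intersection property (any finite sub-intersection of tower members is mod finite equal to the largest of them, hence infinite), so extends to a non-principal ultrafilter $\mathcal U$ on $\omega$. For each $\alpha<\mathfrak t$ we have $A_\alpha\in\mathcal U$, so the closed subsets $\overline{\pi_\alpha\big(\{h_n:n\in A_\alpha\cap C\}\big)}$ of the compactum $K_\alpha$, one for each $C\in\mathcal U$, have the finite intersection property and meet in a point $p_\alpha\in K_\alpha$. A standard argument shows that $p_\alpha$ is the $\mathcal U$-limit of $(\pi_\alpha(h_n))_{n<\omega}$, meaning $\{n:\pi_\alpha(h_n)\in V\}\in\mathcal U$ for every open neighbourhood $V$ of $p_\alpha$. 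Setting $p:=(p_\alpha)\in X$, any basic neighbourhood $U=\Pi U_\alpha$ of $p$ has finite support $F$, so $\{n:h_n\in U\}=\bigcap_{\alpha\in F}\{n:\pi_\alpha(h_n)\in U_\alpha\}$ is a finite intersection of members of $\mathcal U$, hence itself in $\mathcal U$ and in particular infinite. Since the $h_n$ are distinct, $p$ is an accumulation point of $H$, and $X$ is countably compact.

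The main conceptual obstacle is exactly the gap left after Theorem \ref{prod-wb}: no single $A\in[\omega]^\omega$ need serve as a uniform pseudo-intersection for the full length-$\mathfrak t$ tower, so one cannot combine the $A_\alpha$'s into one set whose projection has compact closure in every coordinate simultaneously. The remedy is that the weaker demand that an ultrafilter $\mathcal U$ merely contain each $A_\alpha$ is already enough, because compact closure of a \emph{single} ultrafilter-large slice suffices for the coordinate $\mathcal U$-limit to exist, and the finite-support structure of basic open sets in the product automatically assembles these coordinatewise limits into one global accumulation point.
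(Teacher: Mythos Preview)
Your proof is correct and shares its first half with the paper: build a $\subs^*$-decreasing tower $\langle A_\alpha : \alpha < \mathfrak{t}\rangle$ so that each projection $\pi_\alpha(\{h_n : n \in A_\alpha\})$ has compact closure $K_\alpha$. Where you diverge is in producing the accumulation point. You extend the tower to a non-principal ultrafilter $\mathcal{U}$ on $\omega$ and take coordinatewise $\mathcal{U}$-limits inside each compactum $K_\alpha$, then use the finite support of basic open sets to conclude that $p = (p_\alpha)_\alpha$ accumulates $H$. The paper instead applies Tychonov's theorem to make $K = \Pi_\alpha K_\alpha$ compact and argues by contradiction: if no accumulation point of $A$ lay in $K$, then $K$ would have a neighbourhood $W$ that is a finite union of elementary open sets meeting $A$ only finitely; the finite support $S$ of $W$ lies below some $\alpha < \mathfrak{t}$, and a cofinite piece of $A_\alpha$ projects into $K_\beta$ for every $\beta \in S$, forcing that infinite piece into $W$ --- a contradiction.

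The two endgames are essentially dual: you fix the point first and then verify every basic neighbourhood; the paper starts from a putative bad neighbourhood and extracts a contradiction from the tower. Your route makes transparent what stands in for the missing pseudo-intersection (the ultrafilter containing every $A_\alpha$), while the paper's route packages the same idea as a single appeal to compactness of $K$. Neither gains over the other in strength or generality.
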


\begin{proof}
Assume that $\{X_\alpha : \alpha < \mathfrak{t}\}$ are weakly bounded spaces
and let $A$ be any countably infinite subset of their product $X = \Pi \{X_\alpha : \alpha < \mathfrak{t}\}$.
We may then define by transfinite recursion sets $A_\alpha \in [A]^\omega$ for all $\alpha < \mathfrak{t}$
that form a mod finite decreasing sequence in exactly the same way as we did in the proof of
theorem \ref{prod-wb}, except that we now do not stop at some $\kappa < \mathfrak{t}$.
Thus we have for each $\alpha < \mathfrak{t}$ that the closure $K_\alpha$ of the
projection $\pi_\alpha(A_\alpha)$ in $X_\alpha$ is compact.

We claim next that $A$ has an accumulation point in the compact subset
$K = \Pi \{K_\alpha : \alpha < \mathfrak{t}\}$ of $X$. Assume, on the contrary, that
this is not the case. Then, by compactness, $K$ has a neighbourhood $W = \cup_{i < n} V_i$,
a finite union of elementary open sets $V_i$ in $X$, such that $W \cap A$ is finite.

For each $i < n$ let $S_i \in [\mathfrak{t}]^{< \omega}$ be the support of $V_i$ and choose
$\alpha < \mathfrak{t}$ such that $S = \cup_{i < n} S_i \subs \alpha$.
Then there is a finite subset $F$ of $A_\alpha$ such that $B = A_\alpha \setm F \subs A_\beta$
for each $\beta \in S$. This implies $\pi_\beta(B) \subs \pi_\beta(A_\beta) \subs \pi_\beta(K_\beta)$
for each $\beta \in S$. But for a point $x \in X$ to belong to $W$ depends only on its
co-ordinates in $S$, hence $K \subs W$ implies $B \subs W$.
Consequently we have $B \subs W \cap A$, contradicting  that $W \cap A$ is finite.
\end{proof}

Again, it is a a natural and interesting question if the statement of theorem \ref{prod-t} is sharp.
This, of course,
ties in with the celebrated Scarborough-Stone problem concerning the countable compactness
of products of sequentially compact spaces, see e.g. \cite{V}.

\end{document}